\documentclass[11pt, reqno]{amsart}
\usepackage{amsmath, amsthm, amscd, amsfonts, amssymb, graphicx, color}
\usepackage[bookmarksnumbered, colorlinks, plainpages]{hyperref}
\hypersetup{colorlinks=true,linkcolor=red, anchorcolor=green, citecolor=cyan, urlcolor=red, filecolor=magenta, pdftoolbar=true}





\textheight 22cm \textwidth 15cm \voffset=-0.5cm
\oddsidemargin=0.5cm \evensidemargin=0.5cm
\topmargin=-0.5cm


\DeclareMathSymbol{\subsetneqq}{\mathbin}{AMSb}{36}


\newcommand{\R}{\mathbb{R}}




\newcommand{\beq}{\begin{eqnarray}}
\newcommand{\eeq}{\end{eqnarray}}
\newcommand{\bq}{\begin{equation}}
\newcommand{\eq}{\end{equation}}
\newcommand{\beqn}{\begin{eqnarray*}}
\newcommand{\eeqn}{\end{eqnarray*}}
\newcommand{\bex}{\begin{exo}}
\newcommand{\eex}{\end{exo}}
\newcommand{\ben}{\begin{enumerate}}
\newcommand{\een}{\end{enumerate}}


\newtheorem{th1}{{\bf Theorem}}[section]
\newtheorem{thm}[th1]{{\bf Theorem}}
\newtheorem{lem}[th1]{{\bf Lemma}}
\newtheorem{prop}[th1]{{\bf Proposition}}
\newtheorem{cor}[th1]{{\bf Corollary}}
\newtheorem{rem}[th1]{\bf Remark}
\newtheorem{rems}[th1]{\bf Remarks}
\newtheorem{defi}[th1]{\bf Definition}


\author[S. Kim and T. Saanouni]{Seongyeon Kim and Tarek Saanouni}
\address{School of Mathematics, Korea Institute for Advanced Study, Seoul 02455, Republic of Korea.}

\address{Departement of Mathematics, College of Science and Arts in Uglat Asugour, Qassim University, Buraydah, Kingdom of Saudi Arabia.}

\email{\it synkim@kias.re.kr}
\email{\sl tarek.saanouni@ipeiem.rnu.tn}
\email{\sl t.saanouni@qu.edu.sa}


\subjclass[2010]{35Q55}
\keywords{Inhomogeneous generalized equation, energy-critical, inverse square potential, global/non-global existence.}

\title[Hartree INLS]{Energy-critical inhomogeneous generalized Hartree equation with inverse square potential}

\date{\today}
\begin{document}
\begin{abstract}
This work studies the Cauchy problem for the energy-critical inhomogeneous Hartree equation with inverse square potential
$$i\partial_t u-\mathcal K_\lambda u=\pm |x|^{-\tau}|u|^{p-2}(I_\alpha *|\cdot|^{-\tau}|u|^p)u, \quad \mathcal K_\lambda=-\Delta+\frac\lambda{|x|^2}$$
in the energy space $H_\lambda^1:=\{f\in L^2,\quad\sqrt{\mathcal{K}_\lambda}f\in L^2\}$. In this paper, we develop a well-posedness theory and investigate the blow-up of solutions in $H_\lambda^1$. Furthermore we present a dichotomy between energy bounded and non-global existence of solutions under the ground state threshold.
To this end, we use Caffarelli-Kohn-Nirenberg weighted interpolation inequalities and some equivalent norms considering $\mathcal K_\lambda$, which make it possible to control the non-linearity involving the singularity $|x|^{-\tau}$ as well as the inverse square potential. The novelty here is the investigation of the energy critical regime which remains still open and the challenge is to deal with three technical problems: a non-local source term, an inhomogeneous singular term $|\cdot|^{-\tau}$, and the presence of an inverse square potential.

\end{abstract}
\maketitle
\tableofcontents
\vspace{ 1\baselineskip}
\renewcommand{\theequation}{\thesection.\arabic{equation}}
\section{Introduction}
In this paper we are concerned with the Cauchy problem for the inhomogeneous generalized Hartree equation with inverse square potential
\begin{equation}
\begin{cases}\label{S}
	i\partial_t u-\mathcal K_\lambda u=\epsilon |x|^{-\tau}|u|^{p-2}(I_\alpha *|\cdot|^{-\tau}|u|^p)u,\\
	u(x,0)=u_0(x), \quad (x,t)\in \mathbb{R}^n \times \mathbb{R},
\end{cases}
\end{equation}
where $p>2$, $\epsilon=\pm1$, and $\mathcal K_\lambda:=-\Delta+\frac\lambda{|x|^{2}}$ satisfying $\lambda>-\frac{(n-2)^2}{4}$.
Here the case $\epsilon =1$ is \textit{defocusing}, while the case $\epsilon =-1$ is \textit{focusing}. 
The Riesz potential is defined on $\mathbb{R}^n$ by 
$$I_\alpha:=\frac{\Gamma(\frac{n-\alpha}2)}{\Gamma(\frac\alpha2)\pi^\frac{n}22^\alpha}\,|\cdot|^{\alpha-n},\quad  0<\alpha<n.$$
The assumption on $\lambda$ comes from the sharp Hardy inequality \cite{abde},
\begin{equation}\label{prt}
	\frac{(n-2)^2}4\int_{\R^n}\frac{|f(x)|^2}{|x|^2}\,dx\leq \int_{\R^n}|\nabla f(x)|^2 dx,
\end{equation}
which guarantees that $\mathcal K_\lambda$ is thepositive self-adjoint extension of $-\Delta+\lambda/|x|^{-2}$. 
It is known that in the range $-\frac{(n-2)^2}4 <\lambda< 1-\frac{(n-2)^2}4$, the extension is not unique (see \cite{ksww,ect}). In such a case, one picks the Friedrichs extension (see \cite{ksww,pst}).

The problem \eqref{S} arises in various physical contexts. In the linear regime ($\epsilon=0$), the considered Schr\"odinger equation models quantum mechanics \cite{ksww,haa}. In the non-linear regime without potentials, namely $\lambda=0\neq\epsilon$, the equation \eqref{S} is of interest in the mean-field limit of large systems of non-relativistic bosonic atoms and molecules in a regime where the number of bosons is very large, but the interactions
between them are weak \cite{fl,hs,pg,mpt}. The homogeneous problem associated to the considered problem \eqref{S}, specifically, when $\lambda=\tau=0$, has several physical origins such as quantum mechanics \cite{pg,pgl} and Hartree-Fock theory \cite{ll}. The particular case $p=2$ and $\lambda=\tau=0$ is called standard Hartree equation. It is a classical limit of a field equation describing a quantum mechanical non-relativistic many-boson system interacting through a two body potential \cite{gvl}.

Now, let us return to the mathematical aspects of the generalized Hartree equation \eqref{S}. 
Recall the critical Sobolev index. 
If $u(x,t)$ is a solution of \eqref{S}, so is the family
$$u_\delta(x,t):=\delta^{\frac{2-2\tau+\alpha}{2(p-1)}} u(\delta x, \delta^2 t),$$ with the re-scaled initial data $u_{\delta,0}:=u_{\delta}(x,0)$ for all $\delta>0$.
Then, it follows that
\begin{equation*}
	\|u_{\delta,0}\|_{\dot H^1}=\delta^{1-\frac n2 +\frac{2-2\tau+\alpha}{2(p-1)}}\|u_0\|_{\dot H^1}.
\end{equation*}
If $p=1+\frac{2-2\tau+\alpha}{n-2}$, the scaling preserves the $\dot H^1$ norm of $u_0$, and in this case, \eqref{S} is referred as the energy-critical inhomogeneous generalized Hartree equation.
Moreover, the solution to \eqref{S} satisfies the mass and energy conservation,
where the mass conservation is
\begin{equation}
	\mathcal M[u(t)]:=\int_{\mathbb{R}^n} |u(x,t)|^2 dx = \mathcal M [u_0],
\end{equation}
and the energy conservation is
\begin{equation}
\mathcal E[u(t)]:= \int_{\mathbb{R}^n}\Big(|\nabla u|^2 +\lambda |x|^{-2} |u|^2\Big)\,dx+ \frac{\epsilon}{p}\mathcal P[u(t)]=\mathcal E[u_0],
\end{equation}
where the potential energy reads
$$\mathcal P[u(t)]:=\int_{\R^n} |x|^{-\tau}\big(I_\alpha *|\cdot|^{-\tau}|u|^p\big)|u|^p dx.$$

To the best of our knowledge, this paper is the first one dealing with the energy-critical inhomogeneous Hatree equation with inverse square potential, precisely \eqref{S} with $\lambda\neq0$. 
The main contribution is to develop a local well-posedness theory in the energy-critical case, as well as to investigate the blow-up of the solution in energy space for the inhomogeneous generalized Hartree equation \eqref{S}.
Precisely, the local theory is based on the standard contraction mapping argument via the availability of Strichartz estimates. More interestingly, we take advantage of some equivalent norms considering the operator $\mathcal{K}_\lambda$, namely $\|\sqrt{\mathcal{K}_\lambda}u\|_r\simeq\|u\|_{\dot W^{1,r}}$, which makes it possible to apply the contraction mapping principle without directly handling with the operator.
In the repulsive regime($\epsilon=-1$), we prove that the solution blows up in finite time without assuming the classical assumption such as radially symmetric or $|x|u_0 \in L^2$. 
The blow-up phenomenon is expressed in terms of the non-conserved potential energy, which may give a criteria in the spirit of \cite{vdd}, which implies in particular the classical phenomena under the ground state threshold in the spirit of \cite{km}. 

In this paper, we deal with three technical problems by the equation \eqref{S}, a non-local source term, the inhomogeneous singular term $|\cdot|^{-\tau}$, and the presence of an inverse square potential.
Indeed, in order to deal with the singular term $|\cdot|^{-\tau}$ in Lebesgue spaces, the method used in the literature decomposes the integrals on the unit ball and it's complementary (see, for example, \cite{mt}). 
However, this is no more sufficient to conclude in the energy critical case. For $\lambda=0$, the first author used some Lorentz spaces with the useful property $|\cdot|^{-\tau}\in L^{\frac{n}{\tau},\infty}$. 
To overcome these difficulties, we make use of some Caffarelli-Kohn-Nirenberg weighted interpolation inequalities which is different from the existing approaches.

Before stating our results, we introduce some Sobolev spaces defined in terms of the operator $\mathcal K_\lambda$ as the completion of $C^\infty_0(\R^n)$ with respect to the norms
\begin{align*}
\|u\|_{\dot W^{1,r}_\lambda}&:=\|\sqrt{\mathcal K_\lambda} u\|_{L^r} \quad \textnormal{and} \quad \|u\|_{W^{1,r}_\lambda}:=\|\langle \sqrt{\mathcal K_\lambda}\rangle u\|_{L^r},
\end{align*}
where $\langle \cdot\rangle:=(1+|\cdot|^2)^{1/2}$ and $L^r:=L^r(\R^n)$. We denote also the particular Hilbert cases $\dot W^{1,2}_\lambda=\dot H^1_\lambda$ and $W^{1,2}_\lambda=H^1_\lambda$.
We note that by the definition of the operator $\mathcal K_\lambda$ and Hardy estimate \eqref{prt}, one has
\begin{align*}
\|u\|_{\dot H^1_\lambda}&:=\|\sqrt{\mathcal K_\lambda}u\|=\big(\|\nabla u\|^2+\lambda\||x|^{-1}u\|^2\big)^\frac12\simeq \|u\|_{\dot H^1},
\end{align*}
where we write for simplicity $\|\cdot\|:=\|\cdot\|_{L^2(\R^n)}$.
\subsection{Well-posedness in the energy-critical case}
The theory of well-poseddness for the inhomogeneous Hartree equation ($\lambda=0$ in \eqref{S}) has been extensively studied in recent several years and is partially understood. (See, for examples, \cite{mt,sa, kls, sk} and references therein). For related results on the scattering theory, see also \cite{sx} for spherically symmetric datum and \cite{cx} in the general case.

Our first result is the following well-posedness in the energy-critical case.
\begin{thm}\label{loc}
	Let $n\ge3$, $\lambda >-\frac{(n-2)^2}{4}$ and $2\kappa=n-2-\sqrt{(n-2)^2+4\lambda}$. 
	Assume that 
	\begin{equation}\label{1.6}
		0<\alpha<n, \quad 2\kappa < n-2-\frac{2(n-2)}{3n-2+2\sqrt{9n^2+8n-16}}
	\end{equation}
	and
	\begin{equation}\label{1.7}
		\frac{\alpha}{2}-\frac{n+2+\sqrt{9n^2+8n-16}}{2}<\tau < \frac{\alpha}{2}-\max\{\frac{n-4}{2}, \frac{n-4}{n},\frac{\kappa}{n-2-2\kappa}-\frac{n}{4}\}.
	\end{equation}
	Then, for $u_0 \in H_{\lambda}^1(\mathbb{R}^n)$, there exist $T>0$ and a unique solution 
	$$u\in C([0,T]; H_\lambda^1) \cap L^q([0,T];W_{\lambda}^{1,r})$$
	to \eqref{S} with $p=1+\frac{2-2\tau+\alpha}{n-2}$
	 for any admissible pair $(q,r)$ in Definition \ref{dms}.
	Furthermore, the continuous dependence on the initial data holds.
\end{thm}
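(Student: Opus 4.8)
The plan is to recast \eqref{S} in its Duhamel (integral) form and solve it by a fixed-point argument. Writing $N(u):=|x|^{-\tau}|u|^{p-2}(I_\alpha*|\cdot|^{-\tau}|u|^p)u$ for the nonlinearity, a solution is a fixed point of
$$\Phi(u)(t):=e^{-it\mathcal K_\lambda}u_0-i\epsilon\int_0^t e^{-i(t-s)\mathcal K_\lambda}N(u)(s)\,ds.$$
I would set up $\Phi$ on a closed ball
$$B_{T,M}:=\{u\in C([0,T];H_\lambda^1)\cap L^q([0,T];W_\lambda^{1,r}):\ \|u\|_{L^\infty_tH_\lambda^1}+\|u\|_{L^q_tW_\lambda^{1,r}}\le M\}$$
for a well-chosen admissible pair $(q,r)$ from Definition \ref{dms} (in the energy-critical regime one typically works with the scaling-critical pair, supplemented by an auxiliary admissible pair to carry the time integrations), equipping $B_{T,M}$ with a weaker metric such as the $L^q_tL^r_x$ distance so that it is complete.

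The Strichartz estimates for the propagator $e^{-it\mathcal K_\lambda}$ then reduce the entire problem to a single nonlinear estimate, namely bounding $N(u)$ and the difference $N(u)-N(v)$ in the appropriate dual Strichartz space at the level of one derivative. The central step is to prove an inequality of the schematic form
$$\big\|\sqrt{\mathcal K_\lambda}\,N(u)\big\|_{L^{\tilde q'}_t L^{\tilde r'}_x}\lesssim \|u\|_{L^q_tW_\lambda^{1,r}}^{2p-1}.$$
To handle this I would exploit the norm equivalence $\|\sqrt{\mathcal K_\lambda}f\|_{L^r}\simeq\|f\|_{\dot W^{1,r}}$ recorded above, which lets me trade the operator $\sqrt{\mathcal K_\lambda}$ for the ordinary gradient and thereby apply a fractional Leibniz rule to distribute the derivative across the three factors $|x|^{-\tau}$, $|u|^{p-2}u$ and $I_\alpha*|\cdot|^{-\tau}|u|^p$. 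The nonlocal convolution factor is controlled by the Hardy--Littlewood--Sobolev inequality, after which the two singular weights $|x|^{-\tau}$ are absorbed using the Caffarelli--Kohn--Nirenberg weighted interpolation inequalities; this is precisely the point where the existing unit-ball decomposition breaks down in the critical case and the weighted inequalities become indispensable. The admissibility, HLS and CKN constraints are exactly encoded in the hypotheses \eqref{1.6}--\eqref{1.7} on $\alpha$, $\tau$ and $\kappa$, whose role is to make all the exponent bookkeeping simultaneously consistent.

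With the nonlinear estimate in hand, the contraction is closed not by gaining a power of $T$ from H\"older in time --- which fails at the critical scaling --- but by exploiting that $\|e^{-it\mathcal K_\lambda}u_0\|_{L^q_tW_\lambda^{1,r}}\to0$ as $T\to0$ for fixed $u_0$, via a density and dominated-convergence argument, so that on a sufficiently short interval the free evolution has small critical Strichartz norm. Choosing $M$ comparable to $\|u_0\|_{H_\lambda^1}$ and then $T$ small, $\Phi$ maps $B_{T,M}$ into itself and is a contraction in the weaker metric, which yields existence and uniqueness. Continuous dependence on the data follows by running the same difference estimates on two solutions and invoking a Gronwall/continuity argument, and the membership of $u$ in $L^q([0,T];W_\lambda^{1,r})$ for \emph{every} admissible pair is recovered a posteriori from the Strichartz inequalities once it is known for one pair.

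The main obstacle is unquestionably the nonlinear estimate: one must simultaneously cope with the nonlocality of the Riesz term, the \emph{two} inhomogeneous singularities $|x|^{-\tau}$ (one outside and one inside the convolution), and the necessity of placing a full derivative on a product of non-smooth factors, all while keeping every Lebesgue exponent admissible and inside the valid CKN range. Reconciling these competing demands is what forces the intricate conditions \eqref{1.6}--\eqref{1.7}, and checking that the admissible window is nonempty while the fractional Leibniz distribution of $\sqrt{\mathcal K_\lambda}$ respects the inverse-square geometry is where the real work lies.
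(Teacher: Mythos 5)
Your overall route is the same as the paper's: Duhamel formula plus Strichartz estimates for $e^{-it\mathcal K_\lambda}$, the equivalence $\|\sqrt{\mathcal K_\lambda}f\|_{L^r}\simeq\|\nabla f\|_{L^r}$ to trade the operator for a genuine gradient, a decomposition of $\nabla\mathcal N[u]$ into pieces handled by Hardy--Littlewood--Sobolev and Caffarelli--Kohn--Nirenberg (this exponent bookkeeping is what produces \eqref{1.6}--\eqref{1.7}), and closing the argument via the vanishing of the free evolution's Strichartz norm as $T\to0$ rather than a power of $T$. One cosmetic remark: no fractional Leibniz rule is needed or used in the paper --- once $\sqrt{\mathcal K_\lambda}$ is replaced by $\nabla$, the ordinary pointwise product rule splits $\nabla\mathcal N[u]$ into the four terms that are then estimated.

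There is, however, a genuine gap in your fixed-point setup: you work on a single ball $B_{T,M}$ in which \emph{both} $\|u\|_{L^\infty_tH^1_\lambda}$ and $\|u\|_{L^q_tW^{1,r}_\lambda}$ are bounded by the same radius $M\sim\|u_0\|_{H_\lambda^1}$. Since $\|e^{-it\mathcal K_\lambda}u_0\|_{L^\infty_tH^1_\lambda}\simeq\|u_0\|_{H_\lambda^1}$ does not shrink as $T\to0$, you must take $M$ large when the datum is large; but then your own nonlinear estimate only gives $\|\Phi(u)\|\lesssim\varepsilon+CM^{2p-1}$ for the self-mapping and $d(\Phi(u),\Phi(v))\le CM^{2p-2}\,d(u,v)$ for the contraction, and neither $\varepsilon+CM^{2p-1}\le M$ nor $CM^{2p-2}\le\frac12$ can hold for large $M$ --- precisely the regime where Theorem \ref{loc} is nontrivial (the small-data case is Theorem \ref{glb}). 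The paper avoids this by using a two-parameter set $X(T,M,N)$ with the bounds decoupled: $\sup_t\|u\|_{H_\lambda^1}\le M:=2C\|u_0\|_{H_\lambda^1}$ (allowed to be large) and the Strichartz-type norm $\|u\|_{\mathcal W_\lambda(I)}\le N:=2\varepsilon$ (small), and --- crucially --- the nonlinear estimates are phrased so that they involve only powers of the small norm $N$, never of $M$; then stability requires $\varepsilon+CN^{2p-1}\le N$ and contraction requires $CN^{2p-2}\le\frac12$, both achievable by choosing $\varepsilon$ small. Your argument is repaired exactly by this decoupling. A further minor caution: the dominated-convergence smallness of the free evolution holds only for admissible pairs with $q<\infty$, so the small norm must be built from such pairs (your choice of a specific non-endpoint pair is consistent with this, but the sup over \emph{all} admissible pairs, which includes $(\infty,2)$, is not).
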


We also provide the small data global well-posedness and scattering results as follows:
\begin{thm}\label{glb}
Under the same conditions as in Theorem \ref{loc} and the smallness assumption on $\|u_0\|_{H_{\lambda}^1}$, there exists a unique global solution  
$$u\in C([0,\infty); H_\lambda^1) \cap L^q([0,\infty);W_{\lambda}^{1,r})$$
to \eqref{S} with $p=1+\frac{2-2\tau+\alpha}{n-2}$ for any admissible pair $(q,r)$.
	Furthermore, the solution scatters in $H_\lambda^1$, i.e., there exists $\phi\in H_\lambda^1$ such that
	$$\lim_{t\to\infty} \|u(t)-e^{-it\mathcal{K}_{\lambda}}\phi\|_{H_{\lambda}^1}=0.$$
\end{thm}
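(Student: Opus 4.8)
The plan is to treat Theorem \ref{glb} as a global-in-time refinement of the local theory in Theorem \ref{loc}, exploiting the exact scale invariance of the energy-critical nonlinearity so that the underlying Strichartz estimates close uniformly over the whole half-line $[0,\infty)$ rather than on a finite interval. First I would recast \eqref{S} in Duhamel form,
$$u(t) = e^{-it\mathcal K_\lambda} u_0 - i\epsilon \int_0^t e^{-i(t-s)\mathcal K_\lambda} F(u(s))\,ds, \quad F(u):=|x|^{-\tau}|u|^{p-2}(I_\alpha*|\cdot|^{-\tau}|u|^p)u,$$
and set up a contraction mapping on the ball $B_\rho := \{ u : \|u\|_{S([0,\infty))} \le \rho \}$, where $S$ denotes the intersection of the Strichartz spaces $L^q([0,\infty); W^{1,r}_\lambda)$ over all admissible pairs $(q,r)$ from Definition \ref{dms}, endowed with a suitable complete metric. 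Here the equivalence $\|\sqrt{\mathcal K_\lambda}u\|_{L^r}\simeq\|u\|_{\dot W^{1,r}}$ is what lets me transfer the Strichartz bounds for $e^{-it\mathcal K_\lambda}$ to estimates expressed in terms of genuine derivatives without manipulating $\mathcal K_\lambda$ directly. The crucial observation is that the Caffarelli-Kohn-Nirenberg nonlinear estimate proved for Theorem \ref{loc} is time-translation invariant and carries no factor depending on the length of the interval, so on the full half-line it yields
$$\|F(u)\|_{N([0,\infty))} \lesssim \|u\|_{S([0,\infty))}^{2p-1}$$
for the dual Strichartz norm $N$, the exponent reflecting that $F$ is homogeneous of degree $2p-1$ in $u$.

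Next I would combine this with the global-in-time homogeneous Strichartz estimate $\|e^{-it\mathcal K_\lambda} u_0\|_{S([0,\infty))} \lesssim \|u_0\|_{\dot H^1_\lambda}\le\|u_0\|_{H^1_\lambda}$. Because $2p-1>1$, choosing both $\rho$ and the data size $\|u_0\|_{H^1_\lambda}$ small forces the Duhamel map to send $B_\rho$ into itself: the nonlinear contribution is of order $\rho^{2p-1}=\rho\cdot\rho^{2p-2}\ll\rho$. A parallel difference estimate, obtained by applying Hölder together with the same CKN inequality to $F(u)-F(v)$, bounds the Lipschitz constant by $\rho^{2p-2}$, so the map is a strict contraction. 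The Banach fixed point theorem then produces the unique global solution in $C([0,\infty);H^1_\lambda) \cap L^q([0,\infty); W^{1,r}_\lambda)$, and continuous dependence on the data follows exactly as in Theorem \ref{loc}.

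For scattering I would define the asymptotic state
$$\phi := u_0 - i\epsilon \int_0^\infty e^{is\mathcal K_\lambda} F(u(s))\,ds$$
and check that the integral converges in $H^1_\lambda$. Writing $e^{it\mathcal K_\lambda}u(t)-\phi = i\epsilon\int_t^\infty e^{is\mathcal K_\lambda}F(u(s))\,ds$ and using that $e^{-it\mathcal K_\lambda}$ is unitary on $H^1_\lambda$, the inhomogeneous Strichartz estimate gives $\|u(t)-e^{-it\mathcal K_\lambda}\phi\|_{H^1_\lambda}\lesssim\|F(u)\|_{N([t,\infty))}$. Since $\|u\|_{S([0,\infty))}<\infty$, the degree-$(2p-1)$ nonlinear estimate shows the tail $\|F(u)\|_{N([t,\infty))}\to0$ as $t\to\infty$ by dominated convergence, which both establishes convergence of the defining integral and yields the claimed limit $\lim_{t\to\infty}\|u(t)-e^{-it\mathcal K_\lambda}\phi\|_{H^1_\lambda}=0$.

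The main obstacle I expect is not the fixed-point scheme, which is routine once the nonlinear estimate is secured, but confirming that the CKN-based estimate inherited from Theorem \ref{loc} is truly global in time, i.e. that the admissibility ranges \eqref{1.6}--\eqref{1.7} leave enough room to select admissible exponents making both the source $N$-norm and the solution $S$-norm finite on $[0,\infty)$ with \emph{no} slack from a factor $T^\theta$, $\theta>0$, of the kind available in the purely local theorem. With that cushion removed, the exponents must be balanced so that the scaling of the nonlinearity matches the gain in the Strichartz estimates exactly, and smallness of the data alone must drive the contraction; carrying this out while simultaneously handling the nonlocal Riesz convolution and the two singular weights $|x|^{-\tau}$ in the scaling-invariant regime is the delicate point.
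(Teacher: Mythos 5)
Your proposal is correct and follows essentially the same route as the paper: a contraction argument on $[0,\infty)$ driven by the global-in-time Strichartz estimates of Proposition \ref{str} together with the interval-length-independent nonlinear estimate of Lemma \ref{non} (so that smallness of $\|u_0\|_{H^1_\lambda}$, rather than smallness of $T$, closes the fixed point), with scattering obtained from the vanishing tail $\|\mathcal N[u]\|_{\Lambda'([t,\infty))}\lesssim \|u\|_{\mathcal W_\lambda([t,\infty))}^{2p-1}\to 0$. The only cosmetic difference is that you define the scattering state $\phi$ directly by the Duhamel integral at $t=\infty$, whereas the paper obtains it as the $H^1_\lambda$-limit of $e^{it\mathcal K_\lambda}u(t)$ via a Cauchy-sequence argument; these are equivalent.
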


\subsection{Blow-up of energy solutions}
We now turn to our attention to blow-up of the solution to \eqref{S} under the ground state threshold, in the focusing regime. A particular global solution of \eqref{S} with $\epsilon=-1$ is the stationary solution to \eqref{S}, namely
\begin{equation}\label{E}
-\Delta \varphi+\frac{\lambda}{|x|^2}\varphi=|x|^{-\tau}|\varphi|^{p-2}(I_\alpha *|\cdot|^{-\tau}|\varphi|^p)\varphi,\quad 0\neq \varphi\in {H^1_\lambda}.
\end{equation}
Such a solution called ground state plays an essential role in the focusing regime. 
The following result is the existence of ground states to \eqref{E}.
\begin{thm}\label{gag}
{Let $n\geq3$, $\lambda>-\frac{(n-2)^2}4$ and $p=1+\frac{2-2\tau+\alpha}{n-2}$. Assume that 
\begin{equation}\label{as}
0<\alpha<n \quad \text{and} \quad 0<\tau< 1+\frac{\alpha}{n}.
\end{equation}\label{inte}
Then, the following inequality holds:
\begin{equation}\label{gagg}
	\int_{\mathbb{R}^n} |x|^{-\tau}|u|^p (I_\alpha \ast |\cdot|^{-\tau}|u|^p) \leq C_{N,\tau,\alpha,\lambda} \big\|\sqrt{\mathcal{K}_{\lambda} }u\big\|^{2p}.
	\end{equation}}
Moreover, there exists $\varphi\in H_{\lambda}^1$ a ground state solution to \eqref{E}, which is a minimizing of the problem
\begin{equation}\label{min}
    \frac1{C_{N,\tau,\alpha,\lambda}}=\inf\Big\{\frac{\|\sqrt{\mathcal K_\lambda} u\|^{2p}}{\mathcal P[u]},\quad0\neq u\in H^1_\lambda\Big\}.
    \end{equation}
\end{thm}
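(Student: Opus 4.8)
The plan is to establish the inequality \eqref{gagg} first, and then to obtain the optimizer in \eqref{min} by a variational argument, identifying it with a solution of \eqref{E}. For the inequality I would combine the Hardy--Littlewood--Sobolev (HLS) inequality, a Caffarelli--Kohn--Nirenberg (CKN) weighted interpolation inequality, and the norm equivalence $\|\sqrt{\mathcal K_\lambda}u\|\simeq\|u\|_{\dot H^1}$ recorded in the introduction. Applying HLS to the Riesz convolution with exponent $s=\frac{2n}{n+\alpha}$ reduces $\mathcal P[u]$ to $\big\||x|^{-\tau}|u|^p\big\|_{L^s}^2=\big\||x|^{-\tau/p}u\big\|_{L^{ps}}^{2p}$. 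The choice $p=1+\frac{2-2\tau+\alpha}{n-2}$ makes the resulting weighted $L^{ps}$ norm scale exactly like $\|\nabla u\|_{L^2}$, so the scale-invariant CKN inequality $\big\||x|^{-\tau/p}u\big\|_{L^{ps}}\lesssim\|\nabla u\|_{L^2}$ applies; here the range \eqref{as} on $\tau$ is precisely what guarantees admissibility of the weight exponents in CKN. Chaining these estimates with $\|\nabla u\|_{L^2}\simeq\|\sqrt{\mathcal K_\lambda}u\|$ yields \eqref{gagg}. This part is essentially exponent bookkeeping and I do not expect it to be the obstacle.

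For the existence of an optimizer, set $m:=\inf\{J(u):0\neq u\in H^1_\lambda\}$ with $J(u):=\|\sqrt{\mathcal K_\lambda}u\|^{2p}/\mathcal P[u]$, so that $m=1/C_{N,\tau,\alpha,\lambda}$ by \eqref{gagg}. The functional $J$ is invariant under both the multiplication $u\mapsto\mu u$ and the dilation $u\mapsto u(\delta\cdot)$, so I would take a minimizing sequence $(u_k)$, normalize $\|\sqrt{\mathcal K_\lambda}u_k\|=1$ (whence $\mathcal P[u_k]\to 1/m$), and use the remaining dilation freedom to fix the scale. The sequence is then bounded in $\dot H^1_\lambda\simeq\dot H^1$, so a subsequence converges weakly to some $u$; the whole difficulty is to show that, after a suitable choice of dilation parameters, this weak limit is nonzero and that $\mathcal P[u_k]\to\mathcal P[u]$.

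The main obstacle is the loss of compactness caused by the scaling invariance in this energy-critical regime: the inverse-square potential $\lambda|x|^{-2}$ and the weights $|x|^{-\tau}$ break translations, so no translation parameters appear, but dilations survive and can make a minimizing sequence concentrate at the origin or spread out. To overcome this I would run a concentration--compactness analysis modulo dilations (equivalently, a profile decomposition adapted to $\mathcal K_\lambda$): the normalization $\mathcal P[u_k]\to 1/m>0$ rules out vanishing, while a Brezis--Lieb splitting together with the strict subadditivity of $m$ inherited from the exact scaling law rules out dichotomy, forcing a single concentrating profile that, after rescaling, converges strongly and produces a nonzero minimizer $u$. Finally, writing the Euler--Lagrange equation for $J$ at $u$ gives $\mathcal K_\lambda u=\nu\,|x|^{-\tau}|u|^{p-2}(I_\alpha*|\cdot|^{-\tau}|u|^p)u$ for some Lagrange multiplier $\nu>0$; absorbing $\nu$ through the scaling $\varphi:=\nu^{1/(2p-2)}u$ yields a solution $\varphi\in H^1_\lambda$ of \eqref{E}, which is the desired ground state.
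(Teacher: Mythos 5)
Your proof of the inequality \eqref{gagg} is exactly the paper's: Hardy--Littlewood--Sobolev (Lemma \ref{hls}) with exponent $\frac{2n}{n+\alpha}$, then the Caffarelli--Kohn--Nirenberg inequality (Lemma \ref{ckn}) with $b=-\tau/p$, $a=0$, and finally the equivalence $\|\sqrt{\mathcal K_\lambda}u\|\simeq\|\nabla u\|$; the admissibility conditions reduce to \eqref{as} precisely because $p=1+\frac{2-2\tau+\alpha}{n-2}$. No issues there.

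For the existence of a minimizer of \eqref{min} you take a genuinely different, and much heavier, route. You treat this as a standard energy-critical loss-of-compactness problem and invoke concentration--compactness/profile decomposition modulo dilations, excluding vanishing by the normalization and dichotomy by strict superadditivity of $t\mapsto t^p$. That strategy can be made to work, but the paper avoids the entire bubbling analysis through two observations your sketch misses. First, since $J$ is invariant under the \emph{two-parameter} scaling $u\mapsto\delta u(\mu\,\cdot)$, one can normalize a minimizing sequence $\psi_n$ so that \emph{both} $\|\psi_n\|_{L^2}=1$ and $\|\sqrt{\mathcal K_\lambda}\psi_n\|=1$: the dilation freedom you proposed to spend on ``fixing the scale'' actually buys boundedness in the inhomogeneous space $H^1_\lambda$, not merely in $\dot H^1_\lambda$. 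Second, precisely because $\tau>0$, the weighted Sobolev embedding $H^1_\lambda\hookrightarrow\hookrightarrow L^r(|x|^{-\tau}dx)$ is \emph{compact} (Lemma \ref{compact}), so after an HLS/H\"older splitting one gets $\mathcal P[\psi_n]\to\mathcal P[\psi]$ along the weak limit directly; weak lower semicontinuity then forces $\|\psi\|=\|\sqrt{\mathcal K_\lambda}\psi\|=1$, hence strong convergence and $J(\psi)=1/C_{N,\tau,\alpha,\lambda}$. In other words, the inhomogeneous weight you listed as an obstacle is exactly what restores compactness --- which is also why a minimizer exists here, whereas for $\tau=0$, $\lambda>0$ it does not (the remark after the theorem, citing the homogeneous theory). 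If you do carry out your route, note one concrete gap in the sketch: the fact that the potential and weight ``break translations'' does not mean translation parameters are absent from a profile decomposition of a bounded $\dot H^1$ sequence; bubbles can still concentrate at points $x_0\neq 0$ or drift to spatial infinity, and you must rule these out separately (they carry no $\mathcal P$ because $|x|^{-\tau}$ kills them, hence by the minimality/strict-inequality argument they cannot carry kinetic energy either). Your final Euler--Lagrange step, absorbing the multiplier by homogeneity via $\varphi=\nu^{1/(2p-2)}u$, is fine and is the same as the paper's.
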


\begin{rems}
\begin{enumerate}
\item[1.] Theorem \ref{gag} does not require to assume that $p\geq2$;
\item[2.] $C_{N,\tau,\alpha,\lambda}$ denotes the best constant in the inequality \eqref{gagg};
\item[3.] compared with the homogeneous regime $\tau=0$, the minimizing \eqref{min} is never reached for $\lambda>0$, see \cite{kmvzz}.
\end{enumerate}
\end{rems}

Here and hereafter, we denote $\varphi$ a ground state solution of \eqref{E} and the scale invariant quantities
\begin{align*}
\mathcal{ME}[u_0]:=\frac{\mathcal E[u_0]}{\mathcal E[\varphi]},\quad
\mathcal{MG}[u_0]:=\frac{\|\sqrt{\mathcal K_\lambda} u_0\|}{\|\sqrt{\mathcal K_\lambda} \varphi\|},\quad
\mathcal{MP}[u_0]:=\frac{\mathcal P[u_0]}{\mathcal P[\varphi]}.
\end{align*}
The next theorem gives a blow-up phenomenon in the energy-critical focusing regime under the ground state threshold.
\begin{thm}\label{t1}
Under the assumptions in \ref{loc} and $\epsilon=-1$, let $\varphi$ be a ground state solution to \eqref{E} and $u\in C_{T^*}(H^1_\lambda)$ be a maximal solution of the focusing problem \eqref{S}.
If 
\begin{equation} \label{ss'}
	\sup_{t\in[0,T^*)}\mathcal I[u(t)]<0,
\end{equation}
then $u$ blows-up in finite or infinite time.
Here, $\mathcal I[u]:=\|\sqrt{\mathcal K_\lambda} u\|^2-\mathcal{P}[u]$.
\end{thm}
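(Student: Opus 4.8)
The plan is to argue by contradiction via a \emph{localized} virial (Morawetz-type) identity, which is what allows us to dispense with the classical finite-variance and radial-symmetry assumptions. Suppose, toward a contradiction, that $u$ were global, i.e. $T^*=\infty$, and uniformly bounded in the energy norm, say $\sup_{t\ge0}\|\sqrt{\mathcal K_\lambda}u(t)\|=:A<\infty$. By the hypothesis \eqref{ss'} there is a constant $c_0>0$ such that $\mathcal I[u(t)]\le -c_0$ for every $t\in[0,\infty)$. I would then fix a smooth radial cut-off weight $\psi_R$ with $\psi_R(x)=|x|^2$ on $\{|x|\le R\}$, satisfying $\psi_R''\le 2$, $\psi_R'\le 2r$, $\|\nabla\psi_R\|_{L^\infty}\lesssim R$ and $|\Delta^2\psi_R|\lesssim R^{-2}$, and set $V_R(t):=\int_{\R^n}\psi_R\,|u(t)|^2\,dx$, which is finite for all $t$ since $u(t)\in L^2$ and $\psi_R$ is bounded.

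First I would establish the virial identity for the energy norm. Differentiating $V_R$ twice in time and using \eqref{S} with $\epsilon=-1$, the radial structure of $\psi_R$ makes the kinetic contribution bounded above by $8\|\nabla u\|^2$, the sign-favorable corrections being supported in $\{|x|\ge R\}$ and hence discardable; the inverse-square potential contributes $+8\lambda\||x|^{-1}u\|^2$ up to an $O(R^{-2})$ tail, so that the linear part reproduces exactly $8\|\sqrt{\mathcal K_\lambda}u\|^2$. The decisive structural point is that at the energy-critical exponent $p=1+\frac{2-2\tau+\alpha}{n-2}$ the weighted non-local nonlinearity produces precisely $-8\mathcal P[u]$, up to a remainder supported in $\{|x|\ge R\}$. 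Collecting these contributions yields $V_R''(t)\le 8\,\mathcal I[u(t)]+\mathcal R_R(t)$, where $\mathcal R_R(t)$ gathers the $O(R^{-2})$ bi-Laplacian and potential tails together with the nonlinear tail.

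The core of the argument is then to show that $\mathcal R_R(t)$ is small \emph{uniformly in} $t$. For the nonlinear tail I would exploit the decay of the singular weight $|x|^{-\tau}$ on $\{|x|\ge R\}$ together with the Caffarelli--Kohn--Nirenberg weighted interpolation inequalities and the bound $\|\sqrt{\mathcal K_\lambda}u(t)\|\le A$, so as to obtain $\sup_{t\ge0}|\mathcal R_R(t)|=o_R(1)$. Choosing $R$ large enough then gives $V_R''(t)\le -4c_0<0$ for all $t\ge0$. On the other hand, mass conservation and the boundedness assumption furnish $|V_R'(t)|\le 2\|\nabla\psi_R\|_{L^\infty}\|u_0\|\,A\lesssim R$ for every $t$. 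Integrating $V_R''\le -4c_0$ twice forces $V_R(t)\le V_R(0)+V_R'(0)\,t-2c_0t^2\to-\infty$, contradicting $V_R\ge0$. Hence the boundedness assumption is untenable: either $T^*<\infty$, or $T^*=\infty$ with $\limsup_{t\to\infty}\|\sqrt{\mathcal K_\lambda}u(t)\|=\infty$, and in either case $u$ blows up in finite or infinite time.

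I expect the main obstacle to be precisely the uniform-in-time control of the nonlinear remainder $\mathcal R_R(t)$, since this is exactly where the three difficulties stressed in the introduction interact: the non-local Riesz term, the inhomogeneous singularity $|x|^{-\tau}$, and the inverse-square potential. Here the classical unit-ball/complement decomposition is insufficient in the energy-critical regime, and the CKN inequalities must be invoked to bound the tail by the conserved/controlled $H^1_\lambda$ data times a factor vanishing as $R\to\infty$. A secondary but genuine technical point is the rigorous justification of the virial computation for merely $H^1_\lambda$ data and a singular potential, which I would handle by regularizing the initial datum, running the identity for smooth solutions, and passing to the limit.
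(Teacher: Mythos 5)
Your proposal is correct and follows essentially the same route as the paper: a localized virial identity with a quadratic-near-the-origin cutoff weight at scale $R$, the exact cancellation at the critical exponent producing $4\mathcal I[u]$ (your factor $8$ is just the normalization $|x|^2$ versus $r^2/2$), tail terms controlled by the decay of $|x|^{-\tau}$ together with Hardy--Littlewood--Sobolev and the Caffarelli--Kohn--Nirenberg inequalities under the assumed uniform $H^1_\lambda$ bound, and a convexity contradiction with $V_R\ge0$. The only differences are cosmetic: you spell out the final double integration and the $|V_R'|\lesssim R$ bound more explicitly than the paper does, which simply states the contradiction.
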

\begin{rems}
	\begin{enumerate}
		\item[1.]
$u$ blows-up in infinite time means that it is global and there is $t_n\to\infty$ such that $\|\sqrt{\mathcal K_\lambda} u(t_n)\|\to\infty$;	
  	  \item[2.]
 the threshold is expressed in terms of the potential energy $\mathcal P[u]$, which is a non conserved quantity;
  	 \item[3.]
  		the theorem here doesn't require the classical assumptions such as spherically symmetric data or $|x|u_0\in L^2$;
		\item[4.]
		a direct consequence of the variance identity is that the energy solution to \eqref{S} blows-up in finite time if $|x|u_0\in L^2$ and \eqref{ss'} is satisfied;
	\end{enumerate}
\end{rems}

The next result is a consequence of Theorem \ref{t1}.
\begin{cor}\label{t2}
Under the assumptions in Theorem \ref{loc} and $\epsilon=-1$, Let $\varphi$ be a ground state of \eqref{E} and $u_0\in H^1_\lambda$ such that
\begin{equation} \label{t11}
\mathcal{ME}[u_0]<1.
\end{equation}
If we assume that
\begin{equation}\label{t13}
	\mathcal{MG}[u_0]>1,\end{equation}
then the energy solution of \eqref{S} blows-up in finite or infinite time 
\end{cor}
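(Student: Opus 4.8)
The plan is to derive the negativity condition \eqref{ss'} of Theorem \ref{t1} from the subthreshold hypotheses \eqref{t11} and \eqref{t13}, so that the desired blow-up follows at once by invoking Theorem \ref{t1}. Throughout I abbreviate $y(t):=\|\sqrt{\mathcal K_\lambda}u(t)\|^2$ and $C:=C_{N,\tau,\alpha,\lambda}$, and I use that $t\mapsto y(t)$ is continuous on $[0,T^*)$ since $u\in C_{T^*}(H^1_\lambda)$.

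First I would record the variational identities satisfied by the ground state. Testing \eqref{E} against $\varphi$ and integrating gives the Nehari relation $\|\sqrt{\mathcal K_\lambda}\varphi\|^2=\mathcal P[\varphi]$, that is $\mathcal I[\varphi]=0$; consequently $\mathcal E[\varphi]=\frac{p-1}{p}\|\sqrt{\mathcal K_\lambda}\varphi\|^2$, while optimality of $\varphi$ in the minimization \eqref{min} forces the best constant to be $C=\|\sqrt{\mathcal K_\lambda}\varphi\|^{2(1-p)}$.

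Next, combining the sharp inequality \eqref{gagg} with the focusing energy $\mathcal E[u]=y-\frac1p\mathcal P[u]$ yields the lower bound $\mathcal E[u(t)]\ge g(y(t))$, where $g(s):=s-\frac{C}{p}s^p$. An elementary one-variable analysis shows that $g$ increases strictly on $[0,y^*]$ and decreases strictly on $[y^*,\infty)$, with $y^*:=\|\sqrt{\mathcal K_\lambda}\varphi\|^2$, and attains its maximum $g(y^*)=\mathcal E[\varphi]$; equivalently $y^*=\frac{p}{p-1}\mathcal E[\varphi]$. I would then run the trapping argument: by energy conservation and \eqref{t11}, $g(y(t))\le\mathcal E[u(t)]=\mathcal E[u_0]<\mathcal E[\varphi]=g(y^*)$ for every $t\in[0,T^*)$, so $y(t)\ne y^*$; since $y(0)>y^*$ by \eqref{t13} and $y$ is continuous, the intermediate value theorem forbids $y(t)$ from ever reaching $y^*$. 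Hence $y(t)>y^*$ throughout, and in fact $y(t)\ge y_2$ for all $t$, where $y_2>y^*$ is the larger root of $g(s)=\mathcal E[u_0]$.

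Finally, solving $\mathcal E[u(t)]=y(t)-\frac1p\mathcal P[u(t)]$ for $\mathcal P[u(t)]$ and using energy conservation gives $\mathcal I[u(t)]=(1-p)y(t)+p\mathcal E[u_0]$. Since $p>1$ and $y(t)\ge y_2>y^*$, I obtain $\sup_{t}\mathcal I[u(t)]\le(1-p)y_2+p\mathcal E[u_0]<(1-p)y^*+p\mathcal E[u_0]=p(\mathcal E[u_0]-\mathcal E[\varphi])<0$, which is exactly \eqref{ss'}; Theorem \ref{t1} then yields blow-up in finite or infinite time. The main obstacle is the trapping step, specifically upgrading the pointwise negativity $\mathcal I[u(t)]<0$ to the uniform bound $\sup_t\mathcal I[u(t)]<0$ demanded by \eqref{ss'}: this rests on the strict gap $\mathcal E[u_0]<\mathcal E[\varphi]$, the separation of the sublevel set $\{g\le\mathcal E[u_0]\}$ into components away from $y^*$, and the continuity of $y(t)$. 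Verifying these structural facts, rather than any single hard estimate, is where the care lies.
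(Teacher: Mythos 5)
Your proposal is correct and follows essentially the same route as the paper: the same auxiliary function $f(s)=s-\frac{C_{N,\tau,\alpha,\lambda}}{p}s^{p}$ (your $g$), the same Pohozaev/Nehari identities $\mathcal P[\varphi]=\|\sqrt{\mathcal K_\lambda}\varphi\|^2$ and $C_{N,\tau,\alpha,\lambda}=\|\sqrt{\mathcal K_\lambda}\varphi\|^{2(1-p)}$, the same energy-trapping plus continuity-in-time argument yielding $\|\sqrt{\mathcal K_\lambda}u(t)\|^2>\|\sqrt{\mathcal K_\lambda}\varphi\|^2$ on $[0,T^*)$, and the same identity $\mathcal I[u]=p\mathcal E[u_0]-(p-1)\|\sqrt{\mathcal K_\lambda}u\|^2$ to verify \eqref{ss'} and conclude via Theorem \ref{t1}. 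Your explicit uniform bound $\sup_{t}\mathcal I[u(t)]\le p\big(\mathcal E[u_0]-\mathcal E[\varphi]\big)<0$ is, if anything, a slightly more careful rendering of the paper's final display.
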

\begin{rems}
\begin{enumerate}
\item[1.]
The assumptions of the above result are more simple to check than \eqref{ss'}, because they are expressed in terms of conserved quantities;
\item[2.]
the above ground state threshold has a deep influence in the NLS context since the pioneering papers \cite{km,Holmer};
\end{enumerate} 
\end{rems}

Finally, we close this subsection with some additional results which gives the boundedness of the energy solution.
\begin{prop}\label{s}
Under the assumptions in Theorem \ref{loc} and $\epsilon=-1$, let $\varphi$ be a ground state solution to \eqref{E} and $u\in C_{T^*}(H^1_\lambda)$ be a maximal solution of the focusing problem \eqref{S}. 
If \begin{equation} \label{ss}
	\sup_{t\in[0,T^*)}\mathcal{MP}[u(t)]<1,
\end{equation}
then 
$u$ is bounded in $H^1_\lambda$.  
\end{prop}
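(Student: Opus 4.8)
The plan is to extract the bound directly from the two conservation laws, turning the hypothesis on the potential energy into a bound on the homogeneous part of the norm. In the focusing case $\epsilon=-1$ the conserved energy is
$$\mathcal E[u(t)]=\|\sqrt{\mathcal K_\lambda}u(t)\|^2-\frac1p\mathcal P[u(t)],$$
and since $I_\alpha$ is a positive kernel the potential energy $\mathcal P[u(t)]$ is non-negative. Rearranging and using $\mathcal E[u(t)]=\mathcal E[u_0]$ gives, for every $t\in[0,T^*)$,
$$\|\sqrt{\mathcal K_\lambda}u(t)\|^2=\mathcal E[u_0]+\frac1p\mathcal P[u(t)].$$
Thus a uniform upper bound on $\mathcal P[u(t)]$ immediately yields a uniform bound on the homogeneous norm $\|u(t)\|_{\dot H^1_\lambda}=\|\sqrt{\mathcal K_\lambda}u(t)\|$.

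First I would feed in the hypothesis \eqref{ss}: writing $\theta:=\sup_{t\in[0,T^*)}\mathcal{MP}[u(t)]<1$ and recalling $\mathcal{MP}[u(t)]=\mathcal P[u(t)]/\mathcal P[\varphi]$, we obtain $\mathcal P[u(t)]\le\theta\,\mathcal P[\varphi]$ for all $t$. Inserting this into the energy identity produces
$$\sup_{t\in[0,T^*)}\|\sqrt{\mathcal K_\lambda}u(t)\|^2\le\mathcal E[u_0]+\frac{\theta}{p}\mathcal P[\varphi],$$
a finite constant: $\mathcal E[u_0]$ is finite because $u_0\in H^1_\lambda$ and \eqref{gagg} bounds $\mathcal P[u_0]$ by $\|\sqrt{\mathcal K_\lambda}u_0\|^{2p}$, while $\mathcal P[\varphi]$ is a fixed number. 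To upgrade this to the full norm I would invoke the conservation of mass $\|u(t)\|=\|u_0\|$ together with $\|u(t)\|_{H^1_\lambda}^2=\|u(t)\|^2+\|\sqrt{\mathcal K_\lambda}u(t)\|^2$, which disposes of the $L^2$ part and gives $\sup_{t\in[0,T^*)}\|u(t)\|_{H^1_\lambda}<\infty$, as claimed.

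I do not expect a genuine obstacle: the entire content sits in the energy identity and the definition of $\mathcal{MP}$, the only points requiring care being the sign bookkeeping of the focusing energy and the non-negativity of $\mathcal P$. Conceptually this is the boundedness half of the dichotomy advertised in the introduction, the exact complement of the blow-up criterion $\sup_t\mathcal I[u(t)]<0$ of Theorem \ref{t1}: there $\mathcal P[u]$ dominates $\|\sqrt{\mathcal K_\lambda}u\|^2$ and drives the solution to blow up, whereas here it stays strictly below the ground-state level $\mathcal P[\varphi]$ and the solution remains trapped in a bounded ball of $H^1_\lambda$.
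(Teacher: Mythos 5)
Your proof is correct, and it takes a genuinely more elementary route than the paper. The paper proceeds through a coercivity (``energy trapping'') lemma, Lemma \ref{bnd'}: if $\mathcal P[u]<c\,\mathcal P[\varphi]$ with $0<c<1$, then $\mathcal E[u]\ge\big(1-\frac{c^{(p-1)/p}}{p}\big)\|\sqrt{\mathcal K_\lambda}u\|^2$; the proof of that lemma requires the sharp Gagliardo--Nirenberg inequality of Theorem \ref{gag} together with the Pohozaev-type identity $\mathcal P[\varphi]=\|\sqrt{\mathcal K_\lambda}\varphi\|^2$ and the identification of the sharp constant $C_{N,\tau,\alpha,\lambda}=(\mathcal P[\varphi])^{1-p}$ (see \eqref{p} and \eqref{poh}), after which boundedness follows from energy conservation. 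You bypass this variational structure entirely: rearranging the conserved energy and inserting the hypothesis \eqref{ss} gives
\begin{equation*}
\|\sqrt{\mathcal K_\lambda}u(t)\|^2=\mathcal E[u_0]+\frac1p\mathcal P[u(t)]\le\mathcal E[u_0]+\frac{\theta}{p}\mathcal P[\varphi],
\qquad \theta:=\sup_{t\in[0,T^*)}\mathcal{MP}[u(t)],
\end{equation*}
and mass conservation upgrades this to the full $H^1_\lambda$ norm. Both arguments are complete; in fact yours shows that mere boundedness only needs $\theta<\infty$ together with the non-negativity of $\mathcal P$ (which guarantees $\mathcal E[u_0]\le\|\sqrt{\mathcal K_\lambda}u_0\|^2<\infty$) and the finiteness of $\mathcal P[\varphi]$, so the hypothesis $\theta<1$ is not fully exploited. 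What the paper's longer route buys is the structural estimate $\|\sqrt{\mathcal K_\lambda}u(t)\|^2\le c_\varphi\,\mathcal E[u_0]$, i.e.\ the kinetic energy controlled \emph{linearly by the conserved energy alone} with a constant depending only on the trapping parameter $c$ and $\varphi$; this trapping form is what feeds into the global existence and scattering program the authors announce as work in progress, whereas your additive bound $\mathcal E[u_0]+\frac{\theta}{p}\mathcal P[\varphi]$ suffices for boundedness but does not exhibit that coercivity.
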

The next is a consequence of Proposition \ref{s}
\begin{cor}\label{s2}
Under the assumptions in Theorem \ref{loc} and $\epsilon=-1$. Let $\varphi$ be a ground state of \eqref{E} and $u_0\in H^1_\lambda$ satisfying \eqref{t11}
If
\begin{equation}\label{t12}
	\mathcal{MG}[u_0]<1, \end{equation}
then the energy solution of \eqref{S} is bounded.
\end{cor}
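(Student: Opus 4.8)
The plan is to reduce Corollary \ref{s2} to Proposition \ref{s}: it suffices to show that the two threshold hypotheses \eqref{t11} and \eqref{t12} force the potential energy to stay uniformly below that of the ground state, namely $\sup_{t\in[0,T^*)}\mathcal{MP}[u(t)]<1$, after which the boundedness of $u$ in $H^1_\lambda$ follows directly from Proposition \ref{s}. To set up the argument I would first record two identities satisfied by the ground state $\varphi$. Pairing the elliptic equation \eqref{E} with $\varphi$ and integrating (the Nehari identity) yields $\|\sqrt{\mathcal K_\lambda}\varphi\|^2=\mathcal P[\varphi]$, while the fact that $\varphi$ attains the infimum in \eqref{min} gives $\mathcal P[\varphi]=C\,\|\sqrt{\mathcal K_\lambda}\varphi\|^{2p}$, where $C:=C_{N,\tau,\alpha,\lambda}$ is the sharp constant of \eqref{gagg}. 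Combining the two yields $C\,\|\sqrt{\mathcal K_\lambda}\varphi\|^{2(p-1)}=1$ and, from the definition of the energy with $\epsilon=-1$, $\mathcal E[\varphi]=\frac{p-1}{p}\|\sqrt{\mathcal K_\lambda}\varphi\|^2>0$.

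Next I would introduce the one-variable function $g(y):=y-\frac{C}{p}\,y^{p}$ for $y\ge0$ and set $y(t):=\|\sqrt{\mathcal K_\lambda}u(t)\|^2$, $y_\varphi:=\|\sqrt{\mathcal K_\lambda}\varphi\|^2$. Applying the sharp inequality \eqref{gagg} in the conserved energy $\mathcal E[u(t)]=\|\sqrt{\mathcal K_\lambda}u(t)\|^2-\frac1p\mathcal P[u(t)]$ gives the lower bound $\mathcal E[u_0]=\mathcal E[u(t)]\ge g\big(y(t)\big)$ for every $t$. A direct study of $g$ shows that it is strictly increasing on $[0,y_\varphi]$ and strictly decreasing afterwards, with a unique maximum at $y_\varphi$ (by the relation $C\,y_\varphi^{p-1}=1$) of value $g(y_\varphi)=\frac{p-1}{p}y_\varphi=\mathcal E[\varphi]$. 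Thus the two hypotheses translate into $\mathcal E[u_0]<\mathcal E[\varphi]=\max g$ and $y(0)<y_\varphi$.

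I would then run the standard continuity (bootstrap) argument. Since $t\mapsto y(t)$ is continuous on $[0,T^*)$ (as $u\in C_{T^*}(H^1_\lambda)$) and $y(0)<y_\varphi$, if $y$ ever reached $y_\varphi$ there would be a first such time $t_1$, at which $g(y(t_1))=g(y_\varphi)=\mathcal E[\varphi]>\mathcal E[u_0]$, contradicting $\mathcal E[u_0]\ge g(y(t))$; hence $y(t)<y_\varphi$ for all $t$. Because $\mathcal E[u_0]<\max g$ and $g$ is increasing on $[0,y_\varphi]$, there is a unique $y^-<y_\varphi$ with $g(y^-)=\mathcal E[u_0]$, and the constraint $g(y(t))\le\mathcal E[u_0]$ together with $y(t)<y_\varphi$ forces $y(t)\le y^-$ for all $t$. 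Feeding this uniform bound back into \eqref{gagg} gives $\mathcal P[u(t)]\le C\,y(t)^p\le C\,(y^-)^p<C\,y_\varphi^{p}=\mathcal P[\varphi]$, where the last equality uses the ground-state identities of the first step. Therefore $\sup_t\mathcal{MP}[u(t)]\le C(y^-)^p/\mathcal P[\varphi]<1$, and Proposition \ref{s} applies.

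Finally, I expect the only genuinely delicate points to be, on the one hand, verifying the ground-state identities rigorously — in particular that the Nehari pairing is justified and that the minimizer of \eqref{min} indeed solves \eqref{E} in the energy-critical, scaling-invariant regime despite the inverse-square potential and the singular weight $|\cdot|^{-\tau}$ — and, on the other hand, securing the strict, uniform gap $y^-<y_\varphi$ rather than a merely pointwise bound $y(t)<y_\varphi$; the latter is exactly what produces $\sup_t\mathcal{MP}[u(t)]<1$ (and not just a pointwise strict inequality) and so allows Proposition \ref{s} to be invoked. For nontrivial data the borderline case $\mathcal E[u_0]\le0$ does not arise, since $0<y(0)<y_\varphi$ already forces $\mathcal E[u_0]\ge g(y(0))>0$.
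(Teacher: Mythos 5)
Your proposal is correct, and its engine coincides with the paper's own proof: you use the same polynomial $g(y)=y-\frac{C}{p}\,y^{p}$ (the paper's $f$ in \eqref{def}), the same ground-state identities $\mathcal P[\varphi]=\|\sqrt{\mathcal K_\lambda}\varphi\|^{2}$ and $C_{N,\tau,\alpha,\lambda}=(\mathcal P[\varphi])^{1-p}$ (the paper's \eqref{p} and \eqref{poh}), the same lower bound $\mathcal E[u_0]=\mathcal E[u(t)]\ge g\big(\|\sqrt{\mathcal K_\lambda}u(t)\|^{2}\big)$ from Theorem \ref{gag}, and the same continuity-in-time argument giving $\|\sqrt{\mathcal K_\lambda}u(t)\|^{2}<y_\varphi:=\|\sqrt{\mathcal K_\lambda}\varphi\|^{2}$ on $[0,T^*)$. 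The difference is the endgame. The paper stops there: since $y_\varphi$ is a fixed constant, the pointwise-in-time inequality $\mathcal{MG}[u(t)]<1$ is already a uniform $\dot H^{1}_\lambda$ bound, which together with mass conservation is exactly the claimed boundedness; Proposition \ref{s} is never actually invoked, despite the paper's framing of the corollary as its consequence. You instead upgrade to the uniform gap $y(t)\le y^{-}<y_\varphi$ via the strict monotonicity of $g$ on $[0,y_\varphi]$, convert it into $\sup_{t}\mathcal{MP}[u(t)]<1$ using \eqref{gagg} and $C\,y_\varphi^{p}=\mathcal P[\varphi]$, and only then apply Proposition \ref{s}. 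This is more work --- the delicate points you flag (existence of $y^{-}$, positivity of $\mathcal E[u_0]$, strictness and uniformity of the gap) arise precisely because of this detour --- but you handle them correctly, and the detour buys something: it realizes the paper's stated logical structure (Corollary \ref{s2} as a consequence of Proposition \ref{s}), and the quantitative trapping $y(t)\le y^{-}$, with a gap controlled by $\mathcal{ME}[u_0]$, is the stronger coercivity statement one typically needs for scattering arguments, whereas the paper's shorter route records only $y(t)<y_\varphi$.
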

\begin{rem}
\begin{enumerate}
	\item[1.]	
the global existence and energy scattering under the assumptions \eqref{ss} in Proposition \ref{s} or \eqref{t11}-\eqref{t12} in Corollary \ref{s2} is investigated in a paper in progress.
\end{enumerate}
\end{rem}

The rest of this paper is organized as follows. In Section 2 we introduce some useful properties that we need. Section 3 develops a local theory and a global one for small datum. In section 4, the existence of ground states is established. Section 5 establishes blow-up of solutions under the ground state threshold and the boundedness of energy solutions. In the appendix, a Morawetz type estimate is proved.

Throughout this paper, the letter $C$ stands for a positive constant which may be different at each occurrence. We also denote $A \lesssim B$ to mean $A \leq CB$ with unspecified constants $C>0$.

\section{Preliminaries}
In this section, we introduce some useful properties which will be utilized throughout this paper. 
We also recall the Strichartz estimates.
Let us start with the Hardy-Littlewood-Sobolev inequality \cite{el} which is suitable for dealing with non-local source term in \eqref{S}:
\begin{lem}\label{hls}
Let $n\geq1$ and $0 <\alpha < n$.
\begin{enumerate}
\item[1.]
Let $s\geq1$ and $r>1$ such that $\frac1r=\frac1s+\frac\alpha n$. Then,
$$\|I_\alpha*g\|_{L^s}\leq C_{n,s,\alpha}\|g\|_{L^r}.$$
\item[2.]
Let $1<s,r,t<\infty$ be such that $\frac1r +\frac1s=\frac1t +\frac\alpha n$. Then,
$$\|f(I_\alpha*g)\|_{L^t}\leq C_{n,t,\alpha}\|f\|_{L^r}\|g\|_{L^s}.$$
\end{enumerate}
\end{lem}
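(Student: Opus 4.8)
The plan is to treat the non-local bound as a standard consequence of the pointwise method of Hedberg, establishing the mapping estimate in item~(1) first and then obtaining the bilinear bound in item~(2) by a single application of H\"older's inequality. Since only the existence of the constants $C_{n,s,\alpha}$ and $C_{n,t,\alpha}$ is asserted (with no sharpness), a non-optimal argument suffices.

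First I would prove~(1). Writing the kernel explicitly as $I_\alpha(x)=c_{n,\alpha}|x|^{\alpha-n}$ and splitting the convolution at a radius $\delta>0$ to be optimized,
$$|I_\alpha*g(x)|\lesssim\int_{|x-y|\le\delta}|x-y|^{\alpha-n}|g(y)|\,dy+\int_{|x-y|>\delta}|x-y|^{\alpha-n}|g(y)|\,dy=:A_\delta(x)+B_\delta(x).$$
The near piece $A_\delta$ I would control by decomposing the ball $\{|x-y|\le\delta\}$ into dyadic annuli and comparing each annular mass of $|g|$ with the Hardy--Littlewood maximal function, giving $A_\delta(x)\lesssim\delta^\alpha Mg(x)$; here the convergence of the resulting geometric series uses $\alpha>0$. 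The far piece $B_\delta$ I would estimate by H\"older's inequality with exponents $r$ and $r'$, the tail integral $\int_{|x-y|>\delta}|x-y|^{(\alpha-n)r'}\,dy$ being finite precisely because the hypothesis $\frac1r=\frac1s+\frac\alpha n$ with $s<\infty$ forces $r<n/\alpha$; this yields $B_\delta(x)\lesssim\delta^{\alpha-n/r}\|g\|_{L^r}$.

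Balancing the two contributions in $\delta$ produces the Hedberg bound $|I_\alpha*g(x)|\lesssim(Mg(x))^{1-\alpha r/n}\|g\|_{L^r}^{\alpha r/n}$. Raising to the power $s$ and integrating, the scaling identity $s(1-\alpha r/n)=r$ (which is exactly equivalent to $\frac1r=\frac1s+\frac\alpha n$) lets me recognize the spatial integral as $\|Mg\|_{L^r}^{r}$, so that the Hardy--Littlewood maximal inequality $\|Mg\|_{L^r}\lesssim\|g\|_{L^r}$, valid since $r>1$, collapses the estimate to $\|I_\alpha*g\|_{L^s}\lesssim\|g\|_{L^r}$. For~(2) I would then set $\frac1m:=\frac1t-\frac1r$, so that H\"older's inequality gives $\|f(I_\alpha*g)\|_{L^t}\le\|f\|_{L^r}\|I_\alpha*g\|_{L^m}$; the relation $\frac1r+\frac1s=\frac1t+\frac\alpha n$ is identical to $\frac1s=\frac1m+\frac\alpha n$, and the hypotheses $1<r,s,t<\infty$ force $1<m<\infty$, so item~(1) applies and concludes the proof.

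The main obstacle is the self-contained verification of~(1): the three places where the hypotheses are genuinely used must be tracked carefully --- the dyadic sum bounding the near part needs $\alpha>0$, the tail integral for the far part needs $r<n/\alpha$ (which one must check does follow from the stated exponent relation), and the final step needs the maximal inequality and hence $r>1$. A shorter alternative is to note that $|\cdot|^{\alpha-n}$ lies in the weak Lebesgue space $L^{n/(n-\alpha),\infty}$ and to quote Young's convolution inequality in Lorentz spaces, but this only displaces the same analysis into the proof of that weak-type Young estimate.
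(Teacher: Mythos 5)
Your argument for item (1) is correct and complete: the Hedberg splitting, the dyadic bound $A_\delta(x)\lesssim\delta^{\alpha}Mg(x)$ (using $\alpha>0$), the tail estimate $B_\delta(x)\lesssim\delta^{\alpha-n/r}\|g\|_{L^r}$ (using $r<n/\alpha$, which you correctly derive from $\frac1r=\frac1s+\frac\alpha n$ and $s<\infty$), the balancing in $\delta$, and the exponent identities $s(1-\frac{\alpha r}{n})=r$ and $\frac{s\alpha r}{n}+r=s$ all check out, with $r>1$ entering only through the maximal inequality. Note that the paper gives no proof of this lemma at all --- it is quoted from Lieb's book \cite{el} --- so any self-contained argument is a different route; yours is the standard non-sharp proof of fractional integration, which is entirely adequate since the paper never needs sharp constants.

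Item (2), however, contains a genuine gap: the claim that the hypotheses $1<r,s,t<\infty$ together with $\frac1r+\frac1s=\frac1t+\frac\alpha n$ force $1<m<\infty$, where $\frac1m=\frac1t-\frac1r=\frac1s-\frac\alpha n$, is false. Nothing in these hypotheses prevents $\frac1s\le\frac\alpha n$: for example $n=1$, $\alpha=\frac12$, $(r,s,t)=(\frac43,4,2)$ satisfies all stated conditions, yet $\frac1m=\frac1t-\frac1r=-\frac14$, so your H\"older split is meaningless. Moreover, no repair is possible within the stated hypotheses, because the inequality itself fails in that regime: taking $g(y)=|y|^{-1/2}(\log|y|)^{-1}$ for $|y|\ge2$ and $g=0$ otherwise gives $g\in L^{4}(\R)$ while $I_{1/2}*g\equiv+\infty$, so the left-hand side is infinite for every $f\neq0$. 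This defect is inherited from the lemma as stated in the paper: the correct statement carries the implicit additional condition $\frac1s>\frac\alpha n$ (equivalently $s<\frac n\alpha$, equivalently $t<r$), which is automatic in Lieb's classical bilinear case $t=1$ (there $r,s>1$ already forces $\frac1r,\frac1s>\frac\alpha n$) and which holds in every application in the paper, where the role of $s$ is played by $\frac{2n}{n+\alpha}<\frac n\alpha$. So you should state this condition explicitly rather than assert it follows; under it one has $0<\frac1m<\frac1s<1$, item (1) applies, and your proof of (2) closes correctly.
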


The following lemma is a weighted version of the Sobolev embedding, that is, a special case of Caffarelli-Kohn-Nirenberg weighted interpolation inequalities \cite{sgw,csl}:
{\begin{lem}\label{ckn}
		Let $n\geq1$ and 
		$$1< p\leq q<\infty, \quad -\frac nq<b\leq a<\frac n{p'} \quad \text{and} \quad a-b-1=n\Big(\frac1q-\frac1p\Big).$$ 
		Then,
		$$\||x|^{b}f\|_{L^q}\leq C\||x|^a\nabla  f\|_{L^p}.$$
\end{lem}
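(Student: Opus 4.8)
The plan is to reduce this weighted interpolation inequality to a weighted Hardy--Littlewood--Sobolev (Stein--Weiss) inequality through a pointwise representation of $f$ by its gradient. First I would argue by density: it suffices to establish the bound for $f\in C_0^\infty(\mathbb{R}^n)$ and pass to the completion. As a consistency check one verifies that the homogeneity relation $a-b-1=n(\frac1q-\frac1p)$ is exactly the requirement that the two sides scale identically under $f\mapsto f(\delta\,\cdot)$, so that a dimensionless constant $C$ can exist.

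The key step is a pointwise bound. For $n\ge2$ and $f\in C_0^\infty$, integrating the fundamental theorem of calculus along every ray issuing from $x$ and averaging over directions $\omega\in S^{n-1}$ yields
$$f(x)=\frac{1}{|S^{n-1}|}\int_{\mathbb{R}^n}\frac{(x-y)\cdot\nabla f(y)}{|x-y|^n}\,dy,$$
and hence the pointwise domination
$$|f(x)|\le c_n\int_{\mathbb{R}^n}\frac{|\nabla f(y)|}{|x-y|^{n-1}}\,dy=c_n\,I_1(|\nabla f|)(x),$$
where $I_1$ is the Riesz potential of order $1$ and $g:=|\nabla f|\ge0$. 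Thus the lemma follows once one controls $\||x|^{b}I_1 g\|_{L^q}$ by $\||x|^{a}g\|_{L^p}$ for nonnegative $g$.

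This last inequality is precisely the Stein--Weiss inequality, the weighted analogue of the Hardy--Littlewood--Sobolev inequality of Lemma \ref{hls}: for $0<\gamma<n$, $1<p\le q<\infty$ and exponents $\beta<\frac nq$, $\sigma<\frac n{p'}$, $\beta+\sigma\ge0$ satisfying $\gamma-\beta-\sigma=n(\frac1p-\frac1q)$, one has $\||x|^{-\beta}I_\gamma g\|_{L^q}\le C\||x|^{\sigma}g\|_{L^p}$. I would apply this with $\gamma=1$, $\beta=-b$ and $\sigma=a$, and then verify that the hypotheses translate term by term into the admissibility conditions: $b>-\frac nq$ gives $\beta<\frac nq$; $a<\frac n{p'}$ gives $\sigma<\frac n{p'}$; $b\le a$ gives $\beta+\sigma\ge0$; and $a-b-1=n(\frac1q-\frac1p)$ is exactly the balance $\gamma-\beta-\sigma=n(\frac1p-\frac1q)$ with $\gamma=1$. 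Stein--Weiss itself I would either cite as classical or derive from Lemma \ref{hls} by a dyadic decomposition of $\mathbb{R}^n$ into annuli $\{2^k\le|x|<2^{k+1}\}$, on each of which the weights are comparable to constants so that plain Hardy--Littlewood--Sobolev applies, summing the pieces in the dyadic index with H\"older and Young.

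I expect the main obstacle to be exactly the regime $q>p$. The tempting shortcut --- freeze $\omega$, apply the one-dimensional weighted Hardy inequality to $r\mapsto f(r\omega)$, and integrate over the sphere --- breaks down, because after raising to the power $q/p\ge1$ one would need the false inclusion $L^1(S^{n-1})\hookrightarrow L^{q/p}(S^{n-1})$. The genuinely $n$-dimensional Stein--Weiss inequality is what correctly encodes this Sobolev-type gain, which is why routing the argument through the global Riesz potential rather than ray-by-ray is essential. The remaining care lies in the borderline cases (the equality $a=b$, where $\beta+\sigma=0$ forces $p<q$, together with the strict constraints $a<\frac n{p'}$ and $b>-\frac nq$), and in the excluded value $n=1$, where $\gamma=1$ is not admissible in Stein--Weiss; the latter I would treat directly, since for $n=1$ there is no angular variable and the statement is precisely Muckenhoupt's one-dimensional weighted Hardy inequality, valid for all $1<p\le q<\infty$.
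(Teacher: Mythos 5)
The paper offers no proof of this lemma at all—it is quoted as a known special case of the Caffarelli--Kohn--Nirenberg/Stein--Weiss inequalities, with citations to \cite{sgw,csl}—and your derivation (density, the pointwise bound $|f|\lesssim I_1(|\nabla f|)$ from the ray-averaging representation, then Stein--Weiss with $\gamma=1$, $\beta=-b$, $\sigma=a$) is precisely the standard argument behind that citation; your dictionary between the lemma's hypotheses and the Stein--Weiss conditions, including the observation that $a=b$ forces $p<q$ through the scaling relation, is correct. One small correction: the case $n=1$ needs no separate Muckenhoupt argument (indeed Muckenhoupt's conditions would not even hold there), because the hypotheses are vacuous when $n=1$: the strict bounds $b>-\frac{n}{q}$ and $a<\frac{n}{p'}$ give $a-b<n+n(\frac1q-\frac1p)$, which combined with $a-b=1+n(\frac1q-\frac1p)$ forces $1<n$, so the restriction $0<\gamma=1<n$ in Stein--Weiss costs nothing.
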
}

Now, we describe several properties related to the operator $\mathcal K_\lambda.$
Since $\|f\|_{H^1} \simeq \|f\|_{H_\lambda^1}$, one has the following compact Sobolev injection (\cite[Lemma 3.1]{cg}):
\begin{lem}\label{compact}
Let $n\geq3$, $0<\tau<2$ and $2<r<\frac{2(n-\tau)}{n-2}$. Then, the following injection is compact:
	$$H^1_\lambda\hookrightarrow\hookrightarrow L^{r}(|x|^{-\tau}\,dx) .$$
\end{lem}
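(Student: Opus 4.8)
The plan is to reduce everything to the ordinary Sobolev space via the norm equivalence $\|f\|_{H^1}\simeq\|f\|_{H^1_\lambda}$ recorded above, and then to establish compactness of $H^1(\R^n)\hookrightarrow L^r(|x|^{-\tau}\,dx)$ by a cut-off argument isolating the two scales at which the weight $|x|^{-\tau}$ is badly behaved: the singularity at the origin and the slow decay at infinity. First I would record the continuous embedding. Applying Lemma \ref{ckn} with $p=2$, $a=0$, $q=r^\ast:=\frac{2(n-\tau)}{n-2}$ and $b=-\tau/r^\ast$ (one checks the constraints $-n/r^\ast<-\tau/r^\ast\le 0<n/2$ and the balance $a-b-1=n(1/q-1/p)$, which hold precisely because $0<\tau<2<n$) gives the critical weighted bound
\begin{equation*}
\int_{\R^n}|g|^{r^\ast}|x|^{-\tau}\,dx\lesssim\|\nabla g\|_{L^2}^{r^\ast}.
\end{equation*}
Combined with the plain Sobolev embedding $H^1\hookrightarrow L^r$ (valid since $r<r^\ast<2^\ast$), this makes $H^1_\lambda\hookrightarrow L^r(|x|^{-\tau}\,dx)$ continuous.

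For compactness, let $(f_k)$ be bounded in $H^1_\lambda$, hence in $H^1$. After passing to a subsequence, $f_k\rightharpoonup f$ weakly in $H^1$, and by Rellich--Kondrachov $f_k\to f$ strongly in $L^r(B_R)$ for every $R>0$, since $r<2^\ast$. Writing $g_k:=f_k-f$, which is still bounded in $H^1$, say $\|g_k\|_{H^1}\le M$, I split
\begin{equation*}
\int_{\R^n}|g_k|^r|x|^{-\tau}\,dx=\int_{|x|<\varepsilon}+\int_{\varepsilon\le|x|\le R}+\int_{|x|>R}.
\end{equation*}

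The two extreme pieces are to be made small uniformly in $k$, and here the strict subcriticality $r<r^\ast$ provides the needed room. Near the origin, Hölder's inequality with exponents $r^\ast/r$ and $r^\ast/(r^\ast-r)$ together with the critical bound above yields
\begin{equation*}
\int_{|x|<\varepsilon}|g_k|^r|x|^{-\tau}\,dx\lesssim\Big(\int_{\R^n}|g_k|^{r^\ast}|x|^{-\tau}\,dx\Big)^{r/r^\ast}\Big(\int_{|x|<\varepsilon}|x|^{-\tau}\,dx\Big)^{1-r/r^\ast}\lesssim M^r\,\varepsilon^{(n-\tau)(1-r/r^\ast)},
\end{equation*}
using $\int_{|x|<\varepsilon}|x|^{-\tau}\,dx\simeq\varepsilon^{n-\tau}$ (finite since $\tau<n$). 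At infinity, since $\tau>0$ one simply bounds $|x|^{-\tau}\le R^{-\tau}$ and invokes the uniform Sobolev bound $\|g_k\|_{L^r}\lesssim M$, so that
\begin{equation*}
\int_{|x|>R}|g_k|^r|x|^{-\tau}\,dx\le R^{-\tau}\|g_k\|_{L^r}^r\lesssim M^r R^{-\tau}.
\end{equation*}
On the middle annulus the weight satisfies $R^{-\tau}\le|x|^{-\tau}\le\varepsilon^{-\tau}$, so there the weighted norm is comparable to the unweighted one, and the Rellich convergence $g_k\to0$ in $L^r(B_R)$ handles it. Concluding: given $\delta>0$, I first choose $\varepsilon$ small and $R$ large so that the first and third integrals are $<\delta$ for all $k$, then let $k\to\infty$ to kill the middle term, which proves $f_k\to f$ in $L^r(|x|^{-\tau}\,dx)$.

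The argument is essentially routine once organized this way; the only point requiring care---and where the hypotheses genuinely enter---is matching the exponents in the weighted Hölder step so that the residual factors $\int_{|x|<\varepsilon}|x|^{-\tau}\,dx$ and $R^{-\tau}$ each carry a strictly positive power. This forces $0<\tau$ at infinity and $\tau<n$ (guaranteed by $\tau<2$) at the origin, while the gap $r<r^\ast$ supplies the positive exponent $1-r/r^\ast$ on the small-radius factor (and is exactly what fails at the critical exponent, where compactness is lost). I expect the main obstacle to be purely bookkeeping: verifying that the single application of the Caffarelli--Kohn--Nirenberg inequality lands precisely on the critical exponent $r^\ast$, so that no second independent weighted estimate is needed.
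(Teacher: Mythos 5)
Your proof is correct, but it cannot be compared against an internal argument because the paper offers none: Lemma \ref{compact} is simply quoted from \cite[Lemma 3.1]{cg}, with the norm equivalence $\|f\|_{H^1}\simeq\|f\|_{H^1_\lambda}$ as the only justification given in the text. Your argument therefore supplies what the paper delegates to a citation, and it does so using only tools already present in the paper: the equivalence of $H^1$ and $H^1_\lambda$ via Hardy's inequality, Lemma \ref{ckn} applied at the critical exponent $r^\ast=\frac{2(n-\tau)}{n-2}$ (your verification of the constraints $-n/r^\ast<-\tau/r^\ast\le 0<n/2$ and of the balance condition is exactly where $0<\tau<2$ enters), Rellich--Kondrachov on balls, and a three-region splitting in which the subcritical gap $r<r^\ast$ produces the positive power $\varepsilon^{(n-\tau)(1-r/r^\ast)}$ near the origin while $\tau>0$ produces the decay $R^{-\tau}$ at infinity. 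The exponent bookkeeping in the weighted H\"older step checks out, and the limiting order of quantifiers (fix $\delta$, choose $\varepsilon$ and $R$ uniformly in $k$, then send $k\to\infty$) is sound; note also that no diagonal extraction is really needed, since weak $H^1$ convergence composed with the compact embedding $H^1(B_R)\hookrightarrow\hookrightarrow L^r(B_R)$ already upgrades to strong convergence on each fixed ball. What the paper's citation buys is brevity; what your route buys is a self-contained lemma whose hypotheses are each visibly load-bearing, and in substance it parallels the proof in the cited reference, so it could be inserted as an appendix without disturbing anything else in the paper.
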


We also have the following equivalent norms to Sobolev ones, see \cite{kmvzz} and \cite[Remark 2.2]{cg}:
\begin{lem}\label{2.2}
Let $n\geq3$, $\lambda>-\frac{(n-2)^2}4$, $1<r<\infty$ and $2\kappa=n-2-\sqrt{(n-2)^2+4\lambda}$. Then,
	\begin{enumerate}
		\item[1.]
		if $\frac{1+\kappa}n<\frac1r<\min\{1,1-\frac\kappa n\}$, thus, $\|f\|_{\dot W^{1,r}}\lesssim \|f\|_{\dot W_\lambda^{1,r}}$
		\item[2.]
		if $\max\{\frac{1}n,\frac{\kappa}n\}<\frac1r<\min\{1,1-\frac\kappa N\}$, thus, $\|f\|_{\dot W_\lambda^{1,r}}\lesssim\|f\|_{\dot W^{1,r}}$
	\end{enumerate}
\end{lem}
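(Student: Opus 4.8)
The two claimed inequalities are dual reformulations of the $L^r$-boundedness of two ``generalized Riesz transforms'' attached to $\mathcal K_\lambda$, so the plan is to reduce to such boundedness statements and then extract them from heat-kernel bounds. First I would note that, since $C_0^\infty(\R^n)$ is dense in both spaces and the classical Riesz transform $\nabla(-\Delta)^{-1/2}$ is bounded on $L^r$ for $1<r<\infty$, the estimate in (1) is equivalent to the $L^r$-boundedness of $\nabla\mathcal K_\lambda^{-1/2}$ (set $g=\sqrt{\mathcal K_\lambda}f$, so that the claim reads $\|\nabla\mathcal K_\lambda^{-1/2}g\|_{L^r}\lesssim\|g\|_{L^r}$), while the estimate in (2) is equivalent to the $L^r$-boundedness of $\sqrt{\mathcal K_\lambda}\,(-\Delta)^{-1/2}$ (set $h=(-\Delta)^{1/2}f$). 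In the Hilbert case $r=2$ both are immediate from the identity $\|\sqrt{\mathcal K_\lambda}f\|^2=\|\nabla f\|^2+\lambda\||x|^{-1}f\|^2$ together with the sharp Hardy inequality \eqref{prt}; the content of the lemma is that the equivalence persists on the stated range of $r$, which is where genuine harmonic analysis is needed.

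Second, the key analytic input I would use is the Gaussian bound for the heat kernel $e^{-t\mathcal K_\lambda}(x,y)$ of the Friedrichs extension. Because $\mathcal K_\lambda$ is invariant under the scaling $f(x)\mapsto\delta^{n/2}f(\delta x)$, its heat kernel is homogeneous of the expected degree and satisfies, for $\kappa\ge0$, a bound of the form
\[
0\le e^{-t\mathcal K_\lambda}(x,y)\lesssim\Big(1+\tfrac{\sqrt t}{|x|}\Big)^{\kappa}\Big(1+\tfrac{\sqrt t}{|y|}\Big)^{\kappa}\,t^{-n/2}\,e^{-|x-y|^2/ct},
\]
with the analogous bound carrying reciprocal weights when $\kappa<0$; the factor $(\sqrt t/|x|)^{\kappa}$ records the singular influence of the inverse-square potential near the origin. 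This is exactly the estimate underlying the equivalent-norm statements of \cite{kmvzz} and \cite[Remark 2.2]{cg}. I would then represent the operators through the subordination formula $\mathcal K_\lambda^{-1/2}=c\int_0^\infty e^{-t\mathcal K_\lambda}\,\frac{dt}{\sqrt t}$ (and the analogue for $(-\Delta)^{-1/2}$), so that the kernels of $\nabla\mathcal K_\lambda^{-1/2}$ and $\sqrt{\mathcal K_\lambda}\,(-\Delta)^{-1/2}$ become explicit $t$-integrals of heat kernels, amenable to pointwise estimation.

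Third, I would prove the $L^r$-boundedness by splitting each kernel into a part supported away from the origin and a part encoding the singularity at $\{x=0\}\cup\{y=0\}$. On the former region the weight factors are harmless and, after integrating in $t$, the operator behaves like a classical Calderón-Zygmund singular integral of order zero (kernel comparable to $|x-y|^{-n}$), hence is bounded for every $1<r<\infty$. On the latter region the heat-kernel bound leaves, after the $t$-integration, an extra homogeneous weight governed by the exponent $\kappa$; passing to polar coordinates and integrating out the angular variables reduces its $L^r$-boundedness to a one-dimensional Hardy-type (weighted Schur) inequality. Tracking the homogeneity in that inequality is precisely what produces the lower bound $\frac{1+\kappa}n$ and the upper bound $1-\frac\kappa n$ in (1), and the shifted lower bound $\max\{\frac1n,\frac\kappa n\}$ in (2).

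I expect the main obstacle to be exactly this near-origin analysis: the inverse-square term destroys the Calderón-Zygmund structure at $x=0$, so the operators are \emph{not} bounded on all of $L^r$, and one must quantify the borderline weight sharply to recover the admissible range, with the endpoints excluded because the weighted Schur test degenerates there. Since the full computation with sharp constants is carried out in \cite{kmvzz} and \cite[Remark 2.2]{cg}, I would organize the argument as above and ultimately invoke those references for the precise conclusion.
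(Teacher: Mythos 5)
Your proposal is consistent with the paper, which in fact gives no proof of this lemma at all: it is quoted directly from \cite{kmvzz} and \cite[Remark 2.2]{cg}, exactly the references you invoke at the end. Your sketch (reduction to $L^r$-boundedness of $\nabla\mathcal K_\lambda^{-1/2}$ and $\sqrt{\mathcal K_\lambda}(-\Delta)^{-1/2}$ via classical Riesz transforms, then heat-kernel bounds with the $(1+\sqrt t/|x|)^{\kappa}$ weights, subordination, and Calder\'on--Zygmund plus weighted Hardy/Schur analysis near the origin to produce the restrictions $\frac{1+\kappa}{n}$, $\max\{\frac1n,\frac\kappa n\}$, $1-\frac\kappa n$) accurately reflects the method used in those references, so there is nothing to fault beyond what you already delegate to them.
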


Finally, we recall the Strichartz estimates. As we shall see, the availability of these estimates is the key role in the proof of Theorem \ref{loc}.
\begin{defi}\label{dms}
Let $n\ge3$. We say that $(q,r)$ is an admissible pair if it satisfies 
\begin{equation*} 	
	2\le q \le \infty, \quad 2\le r \le \frac{2n}{n-2} \quad \text{and} \quad \frac2q+\frac{n}{r}=\frac{n}{2}.
	\end{equation*}
\end{defi}
\begin{prop}\cite{bpst,zz,df}\label{str}
Let $n\geq3$, $\lambda>-\frac{(n-2)^2}4$. Then, there exists $C>0$ such that 
\begin{enumerate}
\item[1.]
$\|e^{-it\mathcal K_\lambda}f\|_{L_t^q(L_x^r)}\leq C\|f\|,$
\item[2.]
$\|\int_0^{t_1}e^{-i(t-t_1)\mathcal K_\lambda}F(\cdot,t_1)dt_1\|_{L_t^q(L_x^r)}\leq C\|F\|_{L_t^{\tilde q'}L_x^{\tilde r'}}.$
\end{enumerate}
\end{prop}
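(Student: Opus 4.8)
The plan is to derive both estimates from the abstract $TT^*$ machinery of Keel--Tao, so that the work reduces to verifying two inputs for the propagator $U(t):=e^{-it\mathcal K_\lambda}$. First, since $\mathcal K_\lambda$ is a nonnegative self-adjoint operator (the Friedrichs extension made positive by the Hardy inequality \eqref{prt}), the spectral theorem shows that $U(t)$ is a one-parameter unitary group on $L^2$; in particular $\|U(t)f\|=\|f\|$ for all $t$, which is the energy bound. Second, I would establish the untruncated dispersive decay
\begin{equation*}
\|U(t)U(s)^*f\|_{L^\infty}\lesssim |t-s|^{-n/2}\|f\|_{L^1}.
\end{equation*}
Interpolating this with the $L^2$ isometry via Riesz--Thorin yields $\|U(t)U(s)^*\|_{L^{r'}\to L^r}\lesssim |t-s|^{-n(\frac12-\frac1r)}$ for $2\le r\le\infty$, which is precisely the decay hypothesis of Keel--Tao with decay exponent $\sigma=\frac n2$.

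Granting these two inputs, the Keel--Tao theorem delivers the homogeneous estimate (item 1) for every admissible pair $(q,r)$ of Definition \ref{dms}, including the endpoint $r=\frac{2n}{n-2}$, $q=2$, which is admissible here since $\sigma=\frac n2>1$ for $n\ge3$. The inhomogeneous estimate (item 2) with two independent admissible pairs $(q,r)$ and $(\tilde q,\tilde r)$ then follows from the bilinear form of the same argument; alternatively, for the non-endpoint range one combines the dual homogeneous estimate with the Christ--Kiselev lemma to restore the retarded cut-off $\int_0^{t}$, while the double-endpoint case is covered directly by the Keel--Tao bilinear interpolation rather than by Christ--Kiselev.

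The main obstacle is the dispersive decay itself, which is genuinely delicate because $|x|^{-2}$ is scaling-critical and cannot be treated as a perturbation of $-\Delta$. For $\lambda\ge0$ one can pass from the Gaussian upper bounds on the heat semigroup $e^{-t\mathcal K_\lambda}$ to the Schr\"odinger decay. For the full range $\lambda>-\frac{(n-2)^2}{4}$, however, a clean pointwise $L^1\to L^\infty$ kernel bound may fail, so I would instead follow the angular-decomposition approach of \cite{bpst,zz,df}: expand $f$ in spherical harmonics, reduce $\mathcal K_\lambda$ on each angular sector to a one-dimensional Bessel operator whose effective order is $\sqrt{(\ell+\frac{n-2}{2})^2+\lambda}$ (so that the lowest sector has order $\frac{n-2}{2}-\kappa$, linking the analysis to the parameter $\kappa$), and prove weighted dispersive bounds that are summable in the angular index through the known asymptotics of Bessel functions. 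Summing these contributions and invoking the scaling symmetry of $\mathcal K_\lambda$ recovers the global-in-time estimates. This angular analysis, together with controlling the threshold degeneration as $\lambda\to-\frac{(n-2)^2}{4}$, is the technical heart of the argument.
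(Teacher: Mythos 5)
The paper offers no proof of Proposition \ref{str} at all --- it is imported from \cite{bpst,zz,df} --- so what must be judged is whether your sketch is a viable proof, and it is not. The fatal step is the dispersive input for Keel--Tao. The bound $\|e^{-i(t-s)\mathcal K_\lambda}\|_{L^1\to L^\infty}\lesssim |t-s|^{-n/2}$ is not merely ``delicate'' in the range considered here: it is \emph{false} whenever $-\frac{(n-2)^2}{4}<\lambda<0$, i.e.\ precisely when $\kappa>0$ in the paper's notation. In that regime the lowest spherical-harmonic sector has effective Bessel order $\frac{n-2}{2}-\kappa<\frac{n-2}{2}$, the generalized eigenfunctions are more singular at the origin than in the free case, and the propagator kernel grows like $|x|^{-\kappa}$ near $x=0$; this is the same obstruction that forces the $\kappa$-dependent restrictions on the Lebesgue exponents in Lemma \ref{2.2}. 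Your fallback does not repair this: if per-sector dispersive bounds were ``summable in the angular index,'' summation would reconstitute the full $L^1\to L^\infty$ estimate, which is false, so no such summable bounds can exist. Your remark that for $\lambda\ge 0$ one can ``pass from Gaussian upper bounds on the heat semigroup to the Schr\"odinger decay'' is also unsupported; there is no transference principle from Gaussian heat-kernel bounds to dispersive estimates, and this is a well-known failure, not a technicality.

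The cited references succeed precisely because they avoid dispersive estimates altogether. The scheme of \cite{bpst} (refined in \cite{zz}) is perturbative around the free flow: one proves a scale-invariant Kato smoothing estimate $\||x|^{-1}e^{-it\mathcal K_\lambda}f\|_{L^2_{t,x}}\lesssim\|f\|_{L^2}$ --- this is where the spherical-harmonics/Hankel-transform (Bessel) analysis actually enters, and it holds for every $\lambda>-\frac{(n-2)^2}{4}$ because only $L^2$-type resolvent bounds per sector are needed, never pointwise kernel bounds. Then one writes Duhamel's formula with $-\Delta$ as principal part and $\lambda|x|^{-2}u$ as a source, estimates the homogeneous part by the free Strichartz estimates, and controls the Duhamel term by the inhomogeneous estimate whose right-hand side is the weighted norm $\||x|F\|_{L^2_{t,x}}$ (dual to free Kato smoothing), applied with $F=\lambda|x|^{-2}e^{-is\mathcal K_\lambda}f$ so that the perturbed smoothing bound closes the argument; the inhomogeneous estimates of item 2, for general admissible pairs, are then supplied by \cite{df}. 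Your unitarity observation and the use of Christ--Kiselev for non-endpoint retarded estimates are fine and can be retained, but the Keel--Tao/dispersive core of your proposal must be replaced by this smoothing-perturbation argument for the proof to stand in the stated range of $\lambda$.
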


Finally, one gives a classical Morawetz estimate proved in the appendix. Let $\phi:\R^n\to\R$ be a smooth function and define the variance potential
$$V_\phi(t):=\int_{\R^n}\phi(x)|u(x,t)|^2\,dx,$$
and the Morawetz action
$$M_\phi(t):=2\Im\int_{\R^n} \bar u(\xi_ju_j)\,dx=2\Im\int_{\R^n} \bar u(\nabla\phi\cdot\nabla u)\,dx.$$
\begin{prop}\label{mrwz}
	Let $\phi:\mathbb{R}^N \rightarrow \mathbb{R}$ be a radial, real-valued multiplier, $\phi=\phi(|x|)$.
	Then, for any solution $u\in C([0,T];H_\lambda^1)$ of the generalized Hartree equation \eqref{S} in the focusing sign with initial data $u_0\in H_{\lambda}^{1}$, the following virial-type identities hold:
	\begin{equation*}
		V'_\phi(t)=
		2\Im\int_{\R^n} \bar u\nabla\phi\cdot\nabla u dx
	\end{equation*}
	and
	\begin{align*}
		V''_\phi(t)=M_\phi'(t)&=4\sum_{k,l=1}^{N}\int_{\R^n}\partial_l\partial_k\phi\Re(\partial_ku\partial_l\bar u)dx-\int_{\R^n}\Delta^2\phi|u|^2dx+4\lambda\int_{\R^n}\nabla\phi\cdot x\frac{|u|^2}{|x|^4}dx\\
		&\qquad-\frac{2(p-2)}{p}\int_{\R^n}\Delta\phi|x|^{-\tau}|u|^p(I_\alpha*|\cdot|^{-\tau}|u|^{p})dx\\
		&\qquad\qquad -\frac{4\tau}p\int_{\R^n}x\cdot\nabla\phi|x|^{-\tau-2}|u|^{p}(I_\alpha*|\cdot|^{-\tau}|u|^p)dx\\
		&\qquad\qquad -\frac{4(N-\alpha)}p\sum_{k=1}^N\int_{\R^n}|x|^{-\tau}|u|^{p}\partial_k\phi(\frac{x_k}{|\cdot|^2}I_\alpha*|\cdot|^{-\tau}|u|^p)dx.
	\end{align*}
\end{prop}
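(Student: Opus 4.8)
The plan is to derive both identities directly from \eqref{S} in the focusing sign $\epsilon=-1$, writing the equation as $\partial_t u=-i\big(\mathcal K_\lambda u-N_0(u)\big)$ with $N_0(u):=|x|^{-\tau}|u|^{p-2}(I_\alpha*|\cdot|^{-\tau}|u|^p)u$, and to carry out all computations formally for smooth, rapidly decaying solutions, recovering the general $H^1_\lambda$ case at the end by approximation. For the first identity, I would differentiate $V_\phi(t)=\int_{\R^n}\phi|u|^2$ under the integral sign using $\partial_t|u|^2=2\Re(\bar u\,\partial_t u)$. Since $\bar uN_0(u)$ and $\frac{\lambda}{|x|^2}|u|^2$ are both real, their pairing with the factor $-i$ contributes nothing, so $\partial_t|u|^2=-2\Im(\bar u\,\Delta u)$; one integration by parts then gives $V_\phi'(t)=2\Im\int_{\R^n}\bar u\,\nabla\phi\cdot\nabla u\,dx=M_\phi(t)$.

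For the second identity I would differentiate $M_\phi(t)$ in time, substitute $\partial_t u$ and $\partial_t\bar u$, and split the result into three groups according to the pieces $-\Delta$, $\frac{\lambda}{|x|^2}$ and $-N_0$. The $-\Delta$ piece is the classical Morawetz calculation and produces, after two integrations by parts, $4\sum_{k,l}\int\partial_k\partial_l\phi\,\Re(\partial_k u\,\partial_l\bar u)\,dx-\int\Delta^2\phi\,|u|^2\,dx$. For the potential piece, $\Im(iw)=\Re w$ together with one integration by parts reduces it to $-2\int|u|^2\,\nabla\phi\cdot\nabla(\lambda|x|^{-2})\,dx$, and since $\nabla(\lambda|x|^{-2})=-2\lambda|x|^{-4}x$ this is exactly $4\lambda\int\nabla\phi\cdot x\,\frac{|u|^2}{|x|^4}\,dx$.

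The nonlinear piece is the main obstacle. After the same manipulation it equals $-2\Re\int\big[\overline{N_0}\,\nabla\phi\cdot\nabla u-\bar u\,\nabla\phi\cdot\nabla N_0\big]\,dx$; integrating the second term by parts and writing $N_0=A|u|^{p-2}u$ with $A:=|x|^{-\tau}(I_\alpha*|\cdot|^{-\tau}|u|^p)$, the two gradient terms combine into $\frac2pA\,\nabla\phi\cdot\nabla(|u|^p)$ while the term in which the derivative hits $\nabla\phi$ gives $A\,\Delta\phi\,|u|^p$, both carrying the overall factor $-2$. A further integration by parts lets $\nabla$ fall on the three $x$-dependent factors of $A$, and each factor yields exactly one of the stated terms: differentiating $|u|^p$ and combining with the $\Delta\phi$ contribution gives the coefficient $-\tfrac{2(p-2)}p$ term; differentiating the weight $|x|^{-\tau}$, via $\nabla|x|^{-\tau}=-\tau|x|^{-\tau-2}x$, gives the $-\tfrac{4\tau}p$ term; and differentiating the Riesz convolution, via $\nabla I_\alpha(z)=-(n-\alpha)\frac{z}{|z|^2}I_\alpha(z)$, produces the nonlocal $-\tfrac{4(n-\alpha)}p$ term with kernel $\frac{(\cdot)_k}{|\cdot|^2}I_\alpha$. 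I expect this convolution term to be the principal difficulty: one must justify differentiating under the singular, nonlocal integral and correctly track the homogeneity factor $n-\alpha$ coming from the kernel.

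Finally, since $u$ only belongs to $H^1_\lambda$ and the weights $|x|^{-2}$, $|x|^{-\tau}$ are singular at the origin, I would make the formal identities rigorous by regularizing the data (or the equation), establishing the identities for the smooth approximations, and passing to the limit with the help of the Strichartz bounds of Proposition \ref{str} and the norm equivalence $\|\sqrt{\mathcal K_\lambda}\cdot\|\simeq\|\cdot\|_{\dot H^1}$ to control the contributions near $x=0$.
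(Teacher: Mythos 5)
Your proposal is correct, and it follows the same Morawetz-multiplier strategy as the paper: compute $V_\phi'$, then differentiate the action $M_\phi$ and split the contributions according to the three terms $-\Delta$, $\lambda|x|^{-2}$, and the Hartree nonlinearity. Your bookkeeping for each piece (the classical terms from $-\Delta$, the potential term $4\lambda\int\nabla\phi\cdot x\,\frac{|u|^2}{|x|^4}\,dx$, and the three nonlinear terms with coefficients $-\frac{2(p-2)}{p}$, $-\frac{4\tau}{p}$, $-\frac{4(n-\alpha)}{p}$ obtained via $\nabla I_\alpha(z)=-(n-\alpha)\frac{z}{|z|^2}I_\alpha(z)$) checks out against the stated identity. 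The one structural difference is worth noting: the paper only works out the new inverse-square-potential contribution (its identity \eqref{aa}) and imports the entire Laplacian-plus-nonlinear computation as Lemma \ref{lem}, quoted without proof from \cite[Proposition 2.12]{sx}; you instead derive those terms from scratch, writing $N_0=A|u|^{p-2}u$ with $A=|x|^{-\tau}(I_\alpha*|\cdot|^{-\tau}|u|^p)$, integrating by parts, and letting $\nabla$ fall on the three $x$-dependent factors of $A$, so your argument is self-contained where the paper's is not. You also flag the regularization needed to justify the formal computation for $H^1_\lambda$ solutions near the singularity at $x=0$, a point the paper passes over entirely. What the paper's route buys is brevity; what yours buys is independence from \cite{sx} and an explicit treatment of rigor.
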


{\section{Well-posedness in the energy space}}
In this section, we develop a local theory in the energy space $H_{\lambda}^1$, Theorem \ref{loc}. Moreover, we prove Theorem \ref{glb} about the global theory for small datum. Let us first denote the source term 
\begin{align*}
\mathcal N&:=\mathcal N[u]:=|x|^{-\tau}|u|^{p-2}(I_\alpha *|\cdot|^{-\tau}|u|^p)u.
\end{align*}
\subsection{Nonlinear estimates} We first establish some nonlinear estimates for $\mathcal{N}[u]$.  These nonlinear estimates will play an important role in proving the well-posedness results applying the contraction mapping principle. Before stating the nonlinear estimates, we introduce some notations. 
We set 
$$\mathcal A=\{(q,r):(q,r) \,\,\text{is}\,\, \text{admissible}\},$$
and then define the norm
$$\|u\|_{\Lambda(I)}=\sup_{(q,r)\in \mathcal A}\|u\|_{L_t^q(I;L_x^r)}$$
and its dual weighted norm 
$$\|u\|_{\Lambda'(I)}=\sup_{(\tilde q,\tilde r)\in \mathcal A}\|u\|_{L_t^{\tilde q'}(I;L_x^{\tilde r'})}$$
for any interval $I\subset \mathbb{R}.$

\begin{lem}\label{non}
Let $n\ge3$, $\lambda >-\frac{(n-2)^2}{4}$ and $p=1+\frac{2-2\tau+\alpha}{n-2}$. 
Assume that  
\begin{equation}\label{ass1}
0<\alpha<n, \quad 2\kappa < \frac{5n-4-\sqrt{9n^2+8n-16}}{2}
\end{equation}
and
\begin{equation}\label{ass2}
\frac{\alpha}{2}-\frac{n+2+\sqrt{9n^2+8n-16}}{2}<\tau < \frac{\alpha}{2}-\max\{\frac{n-4}{2}, \frac{n-4}{n},\frac{\kappa}{n-2-2\kappa}-\frac{n}{4}\}.
\end{equation}
Then, {there is an admissible pair $(q,r)$, such that}
\begin{equation}\label{non1}
\|\sqrt{\mathcal{K}_{\lambda}}\,\mathcal{N}[u]\|_{\Lambda'(0,T)} \leq C \|u\|_{L_t^{q}(I;\dot W_{\lambda}^{1,r})}^{2p-1}
\end{equation}
and
\begin{equation}\label{non2}
\|\mathcal{N}[u]-\mathcal{N}[v]\|_{\Lambda'(0,T)} \leq C (\|u\|_{L_t^q(I;\dot W_{\lambda}^{1,r})}^{2p-2} +\|u\|_{L_t^q(I;\dot W_{\lambda}^{1,r})}^{2p-2})\|u-v\|_{L_t^q(I;L_x^r)}.
\end{equation}
\end{lem}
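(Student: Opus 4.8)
The plan is to reduce both estimates to ordinary gradient bounds and then to run a Hölder / Hardy--Littlewood--Sobolev / Caffarelli--Kohn--Nirenberg scheme in which the singular weights are absorbed into $\dot W^{1,r}$ norms.

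\emph{Reduction via equivalent norms.} First I would use Lemma \ref{2.2} to trade the operator $\sqrt{\mathcal K_\lambda}$ for a plain gradient. On the left of \eqref{non1}, for a dual admissible exponent $\tilde r'$ lying in the window of part (2) of Lemma \ref{2.2}, we have $\|\sqrt{\mathcal K_\lambda}\mathcal N[u]\|_{L^{\tilde r'}}=\|\mathcal N[u]\|_{\dot W^{1,\tilde r'}_\lambda}\lesssim \|\nabla \mathcal N[u]\|_{L^{\tilde r'}}$; on the right, part (1) gives $\|\nabla u\|_{L^r}\lesssim \|u\|_{\dot W^{1,r}_\lambda}$. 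This is precisely where the constraints on $\kappa$ (equivalently on $\lambda$) in \eqref{ass1} enter: they guarantee that the chosen Strichartz exponents fall into the overlap of the two admissibility windows of Lemma \ref{2.2}. Thus it suffices to bound $\|\nabla\mathcal N[u]\|_{L^{\tilde r'}}$ by a product of $\|\nabla u\|_{L^r}$ factors with matching time integrability.

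\emph{Differentiation and estimation of the pieces.} Next I would apply the product rule to $\nabla\mathcal N[u]$. The derivative distributes onto three groups of factors, producing schematically a term $|x|^{-\tau-1}|u|^{p-1}(I_\alpha*|\cdot|^{-\tau}|u|^p)$ where it hits the singular weight, a term $|x|^{-\tau}|u|^{p-2}\nabla u\,(I_\alpha*|\cdot|^{-\tau}|u|^p)$ where it hits the local power, and a nonlocal term $|x|^{-\tau}|u|^{p-1}\big(I_\alpha*\nabla(|\cdot|^{-\tau}|u|^p)\big)$, which itself splits, after moving the derivative inside the convolution, into contributions carrying either $|x|^{-\tau-1}|u|^p$ or $|x|^{-\tau}|u|^{p-1}\nabla u$. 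Each term is then handled the same way: split the product by Hölder, control the Riesz factor by Lemma \ref{hls} with the inner exponent $s_1$ chosen so that $\frac1{s_1}=\frac1s+\frac\alpha n$, and finally absorb every singular weight $|x|^{-\tau}$ or $|x|^{-\tau-1}$ into a gradient norm via Lemma \ref{ckn}. Reassembling bounds each term by $\|\nabla u\|_{L^r}^{2p-1}$, Lemma \ref{2.2} converts this back to $\|u\|_{\dot W^{1,r}_\lambda}^{2p-1}$, and integrating in time by Hölder against the relation $\frac1{\tilde q'}=\frac{2p-1}{q}$ (with both $(q,r)$ and $(\tilde q,\tilde r)$ admissible) closes \eqref{non1}.

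\emph{The difference estimate.} For \eqref{non2} I would follow the same route but without the gradient, so no product rule is needed. Using the elementary inequalities $\big||u|^{p-2}u-|v|^{p-2}v\big|\lesssim(|u|^{p-2}+|v|^{p-2})|u-v|$ and $\big||u|^{p}-|v|^{p}\big|\lesssim(|u|^{p-1}+|v|^{p-1})|u-v|$ to expand $\mathcal N[u]-\mathcal N[v]$ --- treating separately the difference in the outer factor and the difference inside the Riesz potential --- one telescopes it into $2p-2$ factors of $u$ or $v$ and a single factor of $u-v$. The same Hölder / Lemma \ref{hls} / Lemma \ref{ckn} bookkeeping then produces the stated bound, the factors of $u,v$ being controlled through the weighted Sobolev embedding and the factor $u-v$ kept in $L^r$.

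\emph{Main obstacle.} The real difficulty is not any single inequality but the simultaneous feasibility of all exponent constraints. One must choose the admissible pairs $(q,r),(\tilde q,\tilde r)$, the Hölder exponents, the HLS scaling $\frac1r+\frac1s=\frac1t+\frac\alpha n$, and the CKN parameters $a,b$ (subject to $-\frac nq<b\le a<\frac n{p'}$ and $a-b-1=n(\frac1q-\frac1p)$) so that they are mutually consistent and also lie in the norm-equivalence windows of Lemma \ref{2.2}. The energy-critical relation $p=1+\frac{2-2\tau+\alpha}{n-2}$ forces the spatial scaling to balance, but squeezing the weights $|x|^{-\tau}$ and $|x|^{-\tau-1}$ into the narrow admissible range is what generates the precise hypotheses \eqref{ass1}--\eqref{ass2}; the quantities $\sqrt{9n^2+8n-16}$ are exactly the endpoints that emerge from solving these inequalities. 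I expect the delicate point to be verifying that \eqref{ass1}--\eqref{ass2} indeed leave a nonempty set of admissible choices.
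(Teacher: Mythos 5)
Your scheme coincides with the paper's own proof in every structural respect: the reduction of \eqref{non1}--\eqref{non2} to plain-gradient estimates via the two norm-equivalence windows of Lemma \ref{2.2} (with the dual endpoint pair $L_t^2 L_x^{2n/(n+2)}$ on the left), the product-rule decomposition of $\nabla\mathcal N[u]$ into exactly the four terms the paper calls $A_1,\dots,A_4$, the H\"older/Hardy--Littlewood--Sobolev/Caffarelli--Kohn--Nirenberg treatment of each term, and the handling of the difference estimate by replacing $\nabla u$ with $u-v$ in the $A_3$ and $A_4$ arguments. However, there is a genuine gap: you stop precisely where the real work of this lemma begins. You write that you ``expect the delicate point to be verifying that \eqref{ass1}--\eqref{ass2} indeed leave a nonempty set of admissible choices,'' but that verification is not an afterthought --- it is the substance of the proof and occupies more than half of it in the paper. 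Concretely, one must rewrite the CKN scaling relations to express each $1/a_i$, $1/b_i$ in terms of $n/r$, discard the redundant constraints, and collapse the whole system to the window \eqref{rr}; then use the admissibility relation $\frac{n}{r}=\frac{n}{2}-\frac{1}{2p-1}$ to eliminate $r$ entirely, which converts everything into the two-sided constraint \eqref{c27} on $p$; and finally translate \eqref{c27} back into the stated bounds on $\tau$ using $p=1+\frac{2-2\tau+\alpha}{n-2}$. Without this chain of eliminations you have not shown that the hypotheses of the lemma imply the existence of the pair $(q,r)$, which is exactly what the lemma asserts.

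A secondary inaccuracy compounds this: you attribute assumption \eqref{ass1} to the requirement that the Strichartz exponents fall inside the overlap of the two windows of Lemma \ref{2.2}. In fact that window requirement, $\frac{1+\kappa}{n}<\frac1r$, becomes (after eliminating $r$) the condition $p>\frac12+\frac{1}{n-2-2\kappa}$, and it is this condition that contributes the term $\frac{\kappa}{n-2-2\kappa}-\frac{n}{4}$ inside the maximum in \eqref{ass2}. Assumption \eqref{ass1} plays a different role: it is precisely the nonemptiness condition for the $\tau$-interval in \eqref{ass2}, obtained by checking that the lower bound of $\tau$ lies below each candidate upper bound; the paper verifies this through the explicit algebraic equivalence
\begin{equation*}
\frac{n-4+\sqrt{9n^2+8n-16}}{8}>\frac{\kappa}{n-2-2\kappa}-\frac{n}{4}
\quad\Longleftrightarrow\quad
\kappa<\frac{5n-4-\sqrt{9n^2+8n-16}}{4},
\end{equation*}
which is where the threshold in \eqref{ass1} actually comes from. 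Identifying correctly which constraint produces which hypothesis is not cosmetic: it is the only way to confirm that the exponent system is solvable under exactly the stated assumptions and not under stronger ones.
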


\begin{proof}
It is sufficient to show that there exist $(q,r)$ for which 
\begin{equation}\label{non11}
\|\nabla\mathcal{N}[u]\|_{L_t^2(I;L_x^{\frac{2n}{n+2}})} \leq C \|\nabla u\|_{L_t^{q}(I;L_x^r)}^{2p-1}
\end{equation}
and
\begin{equation}\label{non22}
\|\mathcal{N}[u]-\mathcal{N}[v]\|_{L_t^2(I;L_x^{\frac{2n}{n+2}})} \leq C \|\nabla u\|_{L_t^q(I;L_x^r)}^{2p-2} \|u-v\|_{L_t^q(I;L_x^r)}
\end{equation}
hold for $\lambda, \alpha, \kappa, \tau, p$ given as in the lemma.
Indeed, by applying the equivalent norms to Sobolev ones (see Lemma \ref{2.2}) we obtain the desired estimates \eqref{non1} and \eqref{non2} if 
\begin{equation}\label{ka}
\max\{\frac1n,\frac{\kappa}n\}<\frac{n+2}{2n}<\min\{1,\frac{n-\kappa}n\}, \quad \frac{1+\kappa}{n}<\frac1{r}<\min\{1,\frac{n-\kappa}{n}\},
\end{equation}
Here, one can easily see that the first condition in \eqref{ka} is always satisfied.

Now we start to prove \eqref{non11}.
Let us set
\begin{equation}\label{st}
\frac1q=\frac{1}{2(2p-1)}, \quad \frac{n-2}{2n}\leq \frac1r \leq \frac12, \quad \frac{2}q+\frac{n}{r}=\frac{n}{2}, \quad 0<\alpha<n.
\end{equation}
We first see that 
\begin{align*}
\|&\nabla\mathcal{N}[u]\|_{L_x^{\frac{2n}{n+2}}}\\
&\lesssim\big\||x|^{-\tau-1}|u|^{p-1}(I_\alpha\ast|\cdot|^{-\tau}|u|^p)\big\|_{L_x^{\frac{2n}{n+2}}}+\big\||x|^{-\tau}|u|^{p-1}(I_\alpha \ast|\cdot|^{-\tau-1}|u|^{p})\big\|_{L_x^{\frac{2n}{n+2}}}\\
&+\||x|^{-\tau}|u|^{p-2}|\nabla u|(I_\alpha\ast|\cdot|^{-\tau}|u|^p)\|_{L_x^{\frac{2n}{n+2}}}+\||x|^{-\tau}|u|^{p-1}(I_\alpha\ast|\cdot|^{-\tau}|u|^{p-1}\nabla u)\|_{L_x^{\frac{2n}{n+2}}}\\
&:=A_1+A_2+A_3+A_4.
\end{align*}
The first term $A_1$ is bounded by using Lemmas \ref{hls} and \ref{ckn},
in turn,
\begin{align*}
\big\||x|^{-\tau-1}|u|^{p-1}|I_\alpha\ast|\cdot|^{-\tau}|u|^p|\big\|_{L_x^{\frac{2n}{n+2}}} 
&\lesssim \||x|^{-\tau-1}|u|^{p-1}\|_{L_x^{a_1}} \||x|^{-\tau}|u|^p\|_{L_x^{b_1}} \\
&=\||x|^{-\frac{\tau+1}{p-1}}u\|^{p-1}_{L_x^{(p-1)a_1}}\||x|^{-\frac{\tau}{p}}u\|^{p}_{L_x^{pb_1}}\\
&\lesssim \|\nabla u\|_{L_x^r}^{2p-1},
\end{align*}
if 
\begin{equation}\label{c7}
0<\frac1{a_1},\frac1{b_1}<1, \quad  \frac1{a_1}+\frac1{b_1}=\frac{n+2}{2n}+\frac{\alpha}{n},
\end{equation}
\begin{equation}\label{c8}
0<\frac{1}{(p-1)a_1}\leq \frac1r\leq1,\quad  0\leq\frac{\tau+1}{p-1}<\frac{n}{(p-1)a_1} ,\quad  \frac{\tau+1}{p-1}-1=\frac{n}{(p-1)a_1}-\frac{n}{r}
\end{equation}
\begin{equation}\label{c9}
0<\frac{1}{pb_1}\leq \frac1r\leq 1,\quad 0 \leq \frac{\tau}{p}<\frac{n}{pb_1},\quad \frac{\tau}{p}-1=\frac{n}{pb_1}-\frac{n}{r}.
\end{equation}

Similarly, 
\begin{align*}
A_2&=\big\||x|^{-\tau}|u|^{p-1}(I_\alpha \ast|\cdot|^{-\tau-1}|u|^{p})\big\|_{L_x^{\frac{2n}{n+2}}}\\
&\leq \||x|^{-\tau}|u|^{p-1}\|_{L_x^{a_2}}\||x|^{-\tau-1}|u|^p\|_{L_x^{b_2}}\\
&\lesssim \|\nabla u\|^{2p-1}_{L_x^r}
\end{align*}
if 
\begin{equation}\label{c10}
0<\frac1{a_2}, \frac{1}{b_2}<1, \quad \frac{1}{a_2}+\frac{1}{b_2}=\frac{n+2}{2n}+\frac{\alpha}{n},
\end{equation}
\begin{equation}\label{c11}
0<\frac1{(p-1)a_2}\leq \frac1r\leq 1, \quad 0\leq \frac{\tau}{p-1}<\frac{n}{(p-1)a_2}, \quad \frac{\tau}{p-1}-1=\frac{n}{(p-1)a_2}-\frac{n}{r},
\end{equation}
\begin{equation}\label{c12}
0<\frac{1}{pb_2}\leq \frac1r \leq 1, \quad 0\leq \frac{\tau+1}{p}<\frac{n}{pb_2}, \quad \frac{\tau+1}{p}-1=\frac{n}{pb_2}-\frac{n}{r}.
\end{equation}

The third term $A_3$ is bounded by using Lemma \ref{hls}, H\"older's inequality and Lemma \ref{ckn} 
in turn as
\begin{align*}
\big\||x|^{-\tau}|u|^{p-2}|\nabla u||I_\alpha \ast|\cdot|^{-\tau}|u|^{p}|\big\|_{L_x^{\frac{2n}{n+2}}}
&\lesssim \||x|^{-\tau}|u|^{p-2}|\nabla u|\|_{L_x^{a_1}} \||x|^{-\tau}|u|^p\|_{L_x^{b_1}}\\
&\leq \||x|^{-\tau}|u|^{p-2}\|_{L_x^{a_3}}\|\nabla u\|_{L_x^r} \||x|^{-\tau}|u|^p\|_{L_x^{b_1}}\\
&=\||x|^{-\frac{\tau}{p-2}}u\|^{p-2}_{L_x^{(p-2)a_3}} \|\nabla u\|_{L_x^r} \||x|^{-\frac{\tau}{p}}u\|^p_{L_x^{pb_1}}\\
&\lesssim \|\nabla u\|_{L_x^r}^{2p-1}
\end{align*}
if 
\begin{equation}\label{c1}
0<\frac1{a_1}, \frac1{b_1}<1, \quad \frac{1}{a_1}+\frac{1}{b_1}=\frac{n+2}{2n}+\frac{\alpha}{n},\quad \frac{1}{a_1}=\frac{1}{a_3}+\frac{1}{r}, 
\end{equation}
\begin{equation}\label{c2}
0<\frac{1}{(p-2)a_3} \leq \frac1r\leq 1, \quad 0\leq \frac{\tau}{p-2}<\frac{n}{(p-2)a_3}, \quad \frac{\tau}{p-2}-1=\frac{n}{(p-2)a_3}-\frac{n}{r},
\end{equation}
\begin{equation}\label{c3}
0<\frac1{pb_1}\leq \frac1r\leq 1, \quad 0\leq \frac{\tau}{p}<\frac{n}{pb_1}, \quad \frac{\tau}{p}-1=\frac{n}{pb_1}-\frac{n}{r}.
\end{equation}

Similarly,
\begin{align*}
A_4&=\||x|^{-\tau}|u|^{p-1}(I_\alpha\ast|\cdot|^{-\tau}|u|^{p-1}\nabla u)\|_{L_x^{\frac{2n}{n+2}}}\\
&\lesssim \||x|^{-\tau}|u|^{p-1}\|_{L_x^{a_2}}\||x|^{-\tau}|u|^{p-1}\nabla u\|_{L_x^{b_2}}\\
&\leq \||x|^{-\frac{\tau}{p-1}}|u|\|^{p-1}_{L_x^{(p-1)a_2}}\||x|^{-\frac{\tau}{p-1}}u\|^{p-1}_{L_x^{(p-1)b_4}}\|\nabla u\|_{L_x^r}\\
&\lesssim \|\nabla u\|_{L_x^r}^{2p-1}
\end{align*}
if 
\begin{equation}\label{c4}
0<\frac1{a_2}, \frac{1}{b_2}<1,\quad \frac1{a_2}+\frac1{b_2}=\frac{n+2}{2n}+\frac{\alpha}{n},\quad \frac{1}{b_2}=\frac{1}{b_4}+\frac1r
\end{equation}
\begin{equation}\label{c5}
0<\frac{1}{(p-1)a_2}\leq \frac1r\leq 1, \quad 0\leq \frac{\tau}{p-1}\leq \frac{n}{(p-1)a_2}, \quad \frac{\tau}{p-1}-1=\frac{n}{(p-1)a_2}-\frac{n}{r}, 
\end{equation}
\begin{equation}\label{c6}
0<\frac1{(p-1)b_4}\leq \frac1r\leq 1, \quad 0\leq \frac{\tau}{p-1}<\frac{n}{(p-1)b_4}, \quad \frac{\tau}{p-1}-1=\frac{n}{(p-1)b_4}-\frac{n}{r}.
\end{equation}

On the other hand, in order to show \eqref{non22}, we first use the following simple inequality
\begin{align*}
\big|\mathcal N[u]-\mathcal N[v]\big| &\lesssim \Big||x|^{-\tau}(|u|^{p-2}+|v|^{p-2})|u-v|(I_\alpha\ast|\cdot|^{-\tau}|u|^p)\Big| 
\\&\qquad\quad+\Big||x|^{-\tau}|v|^{p-1}\big(I_\alpha \ast|\cdot|^{-\tau}(|u|^{p-1}+|v|^{p-1})|u-v|\big)\Big|.
\end{align*}
From this, we see that 
\begin{align*}
\|\mathcal N[u]-\mathcal N[v]\|_{L_x^{\frac{2n}{n+2}}} &\leq \||x|^{-\tau}(|u|^{p-2}+|v|^{p-2})|u-v|(I_\alpha\ast|\cdot|^{-\tau}|u|^p)\|_{L_x^{\frac{2n}{n+2}}}\\
&\qquad+\||x|^{-\tau}|v|^{p-1}\big(I_\alpha \ast|\cdot|^{-\tau}(|u|^{p-1}+|v|^{p-1})|u-v|\big)\|_{L_x^{\frac{2n}{n+2}}}\\
&:=B_1+B_2.
\end{align*}

Replacing $\nabla u$ with $u-v$ in the process of dealing with $A_3$, we get 
\begin{align*}
B_1&=\||x|^{-\tau}(|u|^{p-2}+|v|^{p-2})|u-v|(I_\alpha\ast|\cdot|^{-\tau}|u|^p)\|_{L_x^{\frac{2n}{n+2}}}\\
&\lesssim \Big(\||x|^{-\tau}|u|^{p-2}|u-v|\|_{L_x^{a_1}}+\||x|^{-\tau}|v|^{p-2}|u-v|\|_{L_x^{a_1}}\Big)\||x|^{-\tau}|u|^p\|_{L_x^{b_1}}\\
&\lesssim \Big(\||x|^{-\frac{\tau}{p-2}}u\|_{L_x^{(p-2)a_3}}^{p-2}+ \||x|^{-\frac{\tau}{p-2}}u\|_{L_x^{(p-2)a_3}}^{p-2}\Big)\|u-v\|_{L_x^r}\||x|^{-\frac{\tau}{p}}u\|_{L_x^{pb_1}}^p\\
&\lesssim \big(\|\nabla u\|_{L_x^{r}}^{2p-2}+\|\nabla v\|_{L_x^{r}}^{2p-2}\big)\|u-v\|_{L_x^r}
\end{align*}
under the conditions \eqref{c1}, \eqref{c2} and \eqref{c3}. 
Similarly, replacing $\nabla u$ with $u-v$ in estimating $A_4$, we also have
\begin{align*}
B_2&=\||x|^{-\tau}|u|^{p-1}(I_\alpha\ast|\cdot|^{-\tau}(|u|^{p-1}+|v|^{p-1})|u-v|)\|_{L_x^{\frac{2n}{n+2}}}\\
&\lesssim \||x|^{-\tau}|u|^{p-1}\|_{L_x^{a_2}}\||x|^{-\tau}(|u|^{p-1}+|v|^{p-1})|u-v|\|_{L_x^{b_2}}\\
&\lesssim \||x|^{-\frac{\tau}{p-1}}u\|_{L_x^{(p-1)a_2}}^{p-1} \Big(\||x|^{-\frac{\tau}{p-1}}u\|_{L_x^{(p-1)b_4}}^p+\||x|^{-\frac{\tau}{p-1}}v\|_{L_x^{(p-1)b_4}}^p\Big)\|u-v\|_{L_x^r}\\
&\lesssim \big(\|\nabla u\|_{L_x^{r}}^{2p-2}+\|\nabla v\|_{L_x^{r}}^{2p-2}\big)\|u-v\|_{L_x^r}
\end{align*}
under the conditions \eqref{c4}, \eqref{c5} and \eqref{c6}.

Now it remains to eliminate some redundant pairs, we then show that there exists an admissible pair $(q,r)$ satisfying the assumptions in the lemma.
The third conditions of \eqref{c8} and \eqref{c9} can be rewritten with respect to $a$ and $b$, respectively, as follow:
\begin{equation}\label{c19}
\frac{n}{a_1}=\frac{(p-1)n}{r}+\tau-p+2, \quad \frac{n}{b_1}=\frac{pn}{r}+\tau-p.
\end{equation}
Inserting these into the second condition of \eqref{c7} implies 
\begin{equation}\label{a}
\frac{(2p-1)n}{r}=\alpha+2p-2\tau-1+\frac{n}{2}.
\end{equation}
Here, we note that this equation is equivalent to the second condition of \eqref{c10} by using \eqref{c18}.
Inserting \eqref{c19} into the conditions in \eqref{c7}, \eqref{c8} and \eqref{c9}, these conditions are summarized as follows:
\begin{equation}\label{c15}
\frac{p-\tau-2}{p-1}<\frac{n}{r}<\frac{p-\tau-2+n}{p-1}, \quad \frac{p-\tau}{p}<\frac{n}{r}<\frac{p-\tau+n}{p},
\end{equation}
\begin{equation}\label{c13}
\tau-p+2\leq 0, \quad \frac{n}{r}\leq n, \quad \frac{p-\tau-2}{p-1} \leq 1 <\frac{n}{r}
\end{equation}
\begin{equation}\label{c14}
\tau-p\leq 0 , \quad \frac{p-\tau}{p}\leq 1 < \frac{n}{r}
\end{equation}
Since $\tau>0$, the first inequalities of the last conditions in \eqref{c13} and \eqref{c14} are redundant.
The first condition in \eqref{c14} is also redundant by the first one in \eqref{c13}.

Also, the third conditions of \eqref{c11} and \eqref{c12} can be rewritten as
\begin{equation}\label{c18}
\frac{n}{a_2}=\frac{(p-1)n}{r}+\tau-p+1, \quad \frac{n}{b_2}=\frac{pn}{r}+\tau-p+1
\end{equation}
Inserting these into the conditions \eqref{c10}, \eqref{c11} and \eqref{c12}, these conditions are summarized as
\begin{equation}\label{c16}
\frac{p-\tau-1}{p-1}<\frac{n}{r}<\frac{p-\tau-1+n}{p-1}, \quad \frac{p-\tau-1}{p}<\frac{n}{r}<\frac{p-\tau-1+n}{p}
\end{equation}
\begin{equation}\label{c17}
\tau-p+1\leq 0, \quad \frac{n}{r}\leq n, \quad \frac{p-\tau-1}{p-1}\leq 1 <\frac{n}{r}, \quad \frac{p-\tau-1}{p}\leq 1 < \frac{n}{  r}.
\end{equation}
Here, since $\tau>0$, the first inequalities of the last two conditions in \eqref{c17} are redundant.
The first conditions in \eqref{c17} is also eliminated by the first one of \eqref{c13}.

Finally, the first two conditions of \eqref{c2} and \eqref{c6} are summarized by inserting the third conditions of \eqref{c2} and \eqref{c6} as
\begin{equation}\label{c20}
\frac{p-\tau-2}{p-2}<\frac{n}r\leq n, \quad \tau-p+2\leq0, \quad \frac{p-\tau-2}{p-2}\leq 1<\frac{n}{r},
\end{equation}
\begin{equation}\label{c21}
\frac{p-\tau-1}{p-1}<\frac{n}{r}\leq n, \quad \tau-p+1\leq 0, \quad \frac{p-\tau-1}{p-1}\leq 1 <\frac{n}{r}.
\end{equation}
Here, the second condition in \eqref{c21} is eliminated by the second one in \eqref{c20}.
Since $p>2$ and $\tau>0$, all lower bounds of $n/r$ in \eqref{c15}, \eqref{c16}, \eqref{c20} and \eqref{c21} are eliminated by $1$. 
Moreover, by using $p>2$ the upper bounds of $n/r$ in the second condition of \eqref{c15} and the first one of \eqref{c16} are also eliminated by the upper one of $n/r$ in the second condition of \eqref{c16}.
As a result, combining all the above conditions, we get 
\begin{equation}\label{rr}
1<\frac{n}{r}<\min\Big\{\frac{p-\tau-2+n}{p-1}, \frac{p-\tau-1+n}{p}\Big\}, \quad 0<\tau\leq p-2.
\end{equation}

On the other hand, substituting the first condition into the third one in \eqref{st} implies
\begin{equation}\label{r}
\frac{n}{r}=\frac{n}{2}-\frac1{2p-1}.
\end{equation}
Note that \eqref{a} is exactly same as $p=1+\frac{2-2\tau+\alpha}{n-2}$ by substituting \eqref{r} into \eqref{a}. 
Eliminating $r$ by inserting \eqref{r} into the second conditios of \eqref{ka} and \eqref{st}, the first one of \eqref{rr}, we then get
\begin{equation}\label{c25}
1+\kappa < \frac{n}{2}-\frac1{2p-1}<\min\{n,n-\kappa\}, \quad \frac{n-2}{2}\leq \frac{n}{2}-\frac1{2p-1}\leq \frac{n}2,
\end{equation}
\begin{equation}\label{c22}
p>\frac{n}{2(n-2)}, \quad \tau <n-1+\min\Big\{(p-1)\Big(\frac{2p}{2p-1}-\frac{n}{2}\Big), p\Big(\frac{2p}{2p-1}-\frac{n}{2}\Big)\Big\}.
\end{equation}
Here, the first condition in \eqref{c25} can be divided into two inequalities
\begin{equation}\label{c26}
p>\frac12+\frac{1}{n-2-2\kappa}, \quad \max\big\{-\frac{n}{2},-\frac{n}{2}+\kappa\big\}<\frac1{2p-1},
\end{equation}
in which the second condition is redundant since the maximum value is always negative.
Since $p>2$ and $n\ge3$, the last condition in \eqref{c25} and the first condition in \eqref{c22} are redundant.
Moreover, since $\frac{2p}{2p-1}-\frac{n}{2}<0$, the last condition in \eqref{c22} is reduced
\begin{equation}\label{c23}
\tau <n-1+ p\Big(\frac{2p}{2p-1}-\frac{n}{2}\Big).
\end{equation}

In order to eliminate $\alpha$, inserting $p=1+\frac{2-2\tau+\alpha}{n-2}$ into the last condition in \eqref{st}, we also have
\begin{equation}\label{c24}
\frac{n}{2}-\frac{(n-2)p}{2}<\tau<n-\frac{(n-2)p}{2}.
\end{equation}
Now we make the lower bounds of $\tau$ less than the upper ones of $\tau$ in \eqref{rr}, \eqref{c23} and \eqref{c24} to obtain 
\begin{align*}
\nonumber
\max\Big\{2, &\frac{5n-4-\sqrt{9n^2+8n-16}}{4(n-2)},\frac{n+4}{n}, \frac{n-2}{2(n-1)}\Big\} \leq \\
&\qquad\qquad\qquad\qquad\quad p \leq \min\Big\{\frac{2n}{n-2}, \frac{5n-4+\sqrt{9n^2+8n-16}}{4(n-2)} \Big\},
\end{align*}
which is reduced 
\begin{equation}\label{c27}
\max\Big\{2,\frac{n+4}{n},\frac12+\frac1{n-2-2\kappa}\Big\} < p < \frac{5n-4+\sqrt{9n^2+8n-16}}{4(n-2)}
\end{equation}
by using $p>2$, $n\ge3$ and combining the first condition in \eqref{c26}.

The assumption \eqref{ass2} follows from inserting $p=1+\frac{2-2\tau+\alpha}{n-2}$ into \eqref{c27}. In fact, \eqref{c27} is expressed with respect to $\tau$, as follows:
\begin{equation}
\frac{\alpha}{2}-\frac{n-4+\sqrt{9n^2+8n-16}}{8}<\tau < \frac{\alpha}{2}-\max\{\frac{n-4}{2}, \frac{n-4}{n},\frac{\kappa}{n-2-2\kappa}-\frac{n}{4}\}.
\end{equation}
Finally, we make the lower bound of $\tau$ less than the upper ones of $\tau$ to deduce 
\begin{equation*}
2\kappa < \frac{5n-4-\sqrt{9n^2+8n-16}}{2},
\end{equation*} 
which implies the assumption \eqref{ass1}.
Indeed, to obtain \eqref{ass1}, we can compute as follows:
\begin{eqnarray*}
&&\frac{n-4+\sqrt{9n^2+8n-16}}{8}>\frac{\kappa}{n-2-2\kappa}-\frac{n}{4}\\
&\Leftrightarrow&\frac{n-2-2\kappa}{8}>\frac{\kappa}{3n-4+\sqrt{9n^2+8n-16}}\times\frac{\sqrt{9n^2+8n-16}-(3n-4)}{\sqrt{9n^2+8n-16}-(3n-4)}\\
&\Leftrightarrow&n-2>\frac{\sqrt{9n^2+8n-16}+5n-4}{4(n-1)}\kappa.
\end{eqnarray*}
This is equivalent to
\begin{align*}
\kappa<\frac{4(n-1)(n-2)}{5n-4+\sqrt{9n^2+8n-16}}&=\frac{4(n-1)(n-2)\big\{5n-4-\sqrt{9n^2+8n-16}\big\}}{(5n-4)^2-9n^2-8n+16}\\
&=\frac{5n-4-\sqrt{9n^2+8n-16}}{4}.
\end{align*}
This ends the proof.
\end{proof}

\subsection{Local well-posedness in the energy space}
By Duhamel's principle, we first write the solution of the Cauchy problem \eqref{S} as fix points of the function
$$\Phi(u)=e^{-it\mathcal{K}_{\lambda}}u_0 + i\epsilon\int_0^t e^{-i(t-s)\mathcal{K}_{\lambda}} \mathcal N[u](s,\cdot) ds$$
where $\mathcal N[u]=|x|^{-\tau}|u|^{p-2}(J_\alpha \ast |\cdot|^{-\tau}|u|^p)u$. 
For appropriate values of $T,M,N>0$, we shall show that $\Phi$ defines a contraction map on 
$$X(T,M,N)=\{u \in C_t(I;H_\lambda^1) \cap L_t^{q}(I;W^{1,r}_{\lambda}): \sup_{t\in I} \|u\|_{H_{\lambda}^1}\leq M, \|u\|_{\mathcal W_{\lambda}(I)}\leq N\}$$
equipped with the distance 
$$d(u,v)=\|u-v\|_{\Lambda(I)}.$$
Here, $I=[0,T]$ and $(q,r)$ is given as in Proposition \ref{str}.
We also define 
$$\|u\|_{\mathcal W_{\lambda}(I)}:= \|u\|_{\Lambda(I)} + \|\sqrt{\mathcal K_{\lambda}}u\|_{\Lambda(I)}$$
and 
$$\|u\|_{\mathcal W{'}_{\lambda}(I)}:= \|u\|_{\Lambda'(I)} + \|\sqrt{\mathcal K_{\lambda}}u\|_{\Lambda'(I)}.$$

We now show that $\Phi$ is well defined on $X$. By Proposition \ref{str}, we get 
\begin{equation}\label{w1}
\|\Phi(u)\|_{\mathcal W_{\lambda}(I)}\leq C\|e^{-it\mathcal K_{\lambda}}u_0\|_{\mathcal W_{\lambda}(I)} +C\big\|\mathcal N[u]\big\|_{\mathcal W{'}_{\lambda}(I)}
\end{equation}
and 
\begin{equation*}
\sup_{t\in I}\|\Phi(u)\|_{H_{\lambda}^1}\leq \|u_0\|_{H_{\lambda}^1}+\sup_{t \in I}\Big\|\int_0^t e^{-i(t-s)\mathcal{K}_{\lambda}} \mathcal N[u](\cdot,s) ds\Big\|_{H_{\lambda}^1}.
\end{equation*}
Here, for the second inequality we used the fact that $e^{it\mathcal K_{\lambda}}$ is an unitary on $L^2$.
Since $\|\langle \sqrt{\mathcal K_{\lambda}} \rangle u \|\lesssim \|u\| + \|\sqrt{\mathcal K_{\lambda}}u\|$, using the fact $e^{it\mathcal K_{\lambda}}$ is an unitary on $L^2$ again, and then applying the dual estimate of the first one in Proposition \ref{str}, we see that
$$ \sup_{t\in I} \Big\| \int_0^t e^{-i(t-s)\mathcal{K}_{\lambda}} \mathcal N[u](\cdot,s) ds \Big\|_{H_{\lambda}^1} \lesssim \|\mathcal{N}[u]\|_{\Lambda'(I)} +\|\sqrt{\mathcal{K}_{\lambda}}\mathcal{N}[u]\|_{\Lambda'(I)}. $$
Hence, 
\begin{equation*}
\sup_{t\in I}\|\Phi(u)\|_{H_\lambda^1} \leq C \|u_0\|_{H_{\lambda}^1}+C\|\mathcal{N}[u]\|_{\mathcal{W}_{\lambda}'(I)}.
\end{equation*}
On the other hand, using Lemma \ref{non}, we get 
\begin{align}\label{w2}
\nonumber
\|\mathcal{N}[u]\|_{\mathcal{W}_{\lambda}'(I)}&\leq C \|\sqrt{\mathcal K_{\lambda}}u\|_{\Lambda(I)}^{2p-1} + \|\sqrt{\mathcal K_{\lambda}}u\|_{\Lambda(I)}^{2p-2}\|u\|_{\Lambda(I)} \\
\nonumber
&\leq C \|\sqrt{\mathcal K_{\lambda}}u\|_{\Lambda(I)}^{2p-2}\|u\|_{\mathcal{W}_{\lambda}(I)}\\
&\leq C N^{2p-1}
\end{align}
if $u \in X$, and for some $\varepsilon>0$ small enough which will be chosen later we get
\begin{equation}\label{sm}
\|e^{it\mathcal{K}_{\lambda}}u_0\|_{\mathcal{W}_{\lambda}(I)}\leq \varepsilon
\end{equation}
which holds for a sufficiently small $T>0$ by the dominated convergence theorem. 
We now conclude that 
\begin{equation*}
\|\Phi(u)\|_{\mathcal{W}_{\lambda}(I)} \leq \varepsilon + CN^{2p-1} \quad \textnormal{and} \quad \sup_{t \in I}\|\Phi(u)\|_{H_{\lambda}^1} \leq C \|u_0\|_{H_\lambda^1} + CN^{2p-1}.
\end{equation*}
Hence we get $\Phi(u)\in X$ for $u \in X$ if 
\begin{equation}\label{w3}
\varepsilon + CN^{2p-1} \leq N \quad \textnormal \quad C\|u_0\|_{H_{\lambda}^1} + CN^{2p-1} \leq M.
\end{equation}

Next we show that $\Phi$ is a contraction on $X$. 
Using the same argument used in \eqref{w1}, we see
\begin{equation*}
\|\Phi(u)-\Phi(v)\|_{\Lambda(I)} \leq C \|\mathcal{N}[u]-\mathcal{N}[v]\|_{\Lambda'(I)}.
\end{equation*}
By applying Lemma \ref{non} (see \eqref{non2}), we see 
\begin{align*}
\|\mathcal{N}[u]-\mathcal{N}[v]\|_{\Lambda'(I)}&\leq C\big(\|\sqrt{\mathcal{K}_{\lambda}} u\|_{\Lambda(I)}^{2p-1}+\|\sqrt{\mathcal{K}_{\lambda}} v\|_{\Lambda(I)}^{2p-2}\big)\|u-v\|_{\Lambda(I)} \\
&\leq C N^{2p-2}\|u-v\|_{\Lambda(I)}
\end{align*}
as in \eqref{w2}.
Hence, for $u,v \in X$ we obtain $d(\Phi(u), \Phi(v))\leq C N^{2p-2}d(u,v)$.
Now by taking $M=2C\|u_0\|_{H_\lambda^1}$ and $N=2\varepsilon$ and then choosing $\varepsilon>0$ small enough so that \eqref{w3} holds and $CN^{2p-2}\leq 1/2$, it follows that $\Phi$ is a contraction on $X$.
Therefore, we have proved that there exists a unique local solution with $u \in C_t(I;H_{\lambda}^1) \cap L_t^q(I;W_{\lambda}^{1,r})$ for any admissible pair $(q,r)$.

\subsection{Global well-posedness in the energy space for small data}
Using the first estimate in Proposition \ref{str}, we observe that \eqref{sm} is satisfied also if $\|u_0\|_{H_{\lambda}^1}$ is sufficiently small,
\begin{equation*}
\|e^{-it\mathcal K_{\lambda}}u_0\|_{\mathcal{W}_\lambda(I)} \leq C \|u_0\|_{H_{\lambda}^1}\leq \varepsilon
\end{equation*}
from which one can take $T=\infty$ in the above argument to obtain a global unique solution. 

The continuous dependence of the solution $u$ with respect to the initial data $u_0$ follows clearly in the same way:
\begin{align*}
d(u,v) &\lesssim d(e^{-it\mathcal{K}_{\lambda}}u_0,e^{-it\mathcal{K}_{\lambda}}v_0) + d\Big(\int_0^t e^{-i(t-s)\mathcal{K}_{\lambda}}\mathcal{N}[u]ds,\int_0^t e^{-i(t-s)\mathcal{K}_{\lambda}}\mathcal{N}[v]ds \Big) \\
&\lesssim \|u_0 - v_0\| + \frac12 d(u,v)
\end{align*}
which implies 
\begin{align*}
d(u,v) \lesssim \|u_0-v_0\|_{H_{\lambda}^1}.
\end{align*}
Here, $u,v$ are the corresponding solutions for initial data $u_0, v_0$, respectively.
\subsection{Scattering in the energy space for small data}
To prove the scattering property, we first note that 
\begin{align*}
\|e^{it_2 \mathcal{K}_{\lambda}}u(t_2)-e^{it_2 \mathcal{K}_{\lambda}}u(t_1)\|_{H_{\lambda}^1} = \Big\|\int_{t_1}^{t_2} e^{is\mathcal{K}_{\lambda}}\mathcal{N}[u]\Big\|_{H_{\lambda}^1}\\
\lesssim \|\mathcal{N}[u]\|_{\mathcal{W}_{\lambda}{'}([t_1 ,t_2])} \\
\lesssim \|u\|^{2p-1}_{\mathcal{W}_{\lambda}([t_1,t_2])} \quad \rightarrow\quad 0
\end{align*}
as $t_1, t_2 \rightarrow {\infty}$. 
This implies that $\phi :=\lim_{t\rightarrow {\infty}} e^{it\mathcal{K}_{\lambda}}u(t)$ exists in $H_\lambda^1$.
Furthermore, 
$$u(t)-e^{-it\mathcal{K}_{\lambda}}\phi=i\int_t^{\infty} e^{i(t-s)\mathcal{K}_{\lambda}}\mathcal{N}[u]ds,$$
and hence 
\begin{align*}
\|u(t)-e^{-it{\mathcal K}_\lambda}\phi\|_{H^1_\lambda}&=\Big\|\int_t^{\infty} e^{i(t-s)\mathcal{K}_{\lambda}}\mathcal{N}[u] ds\Big\|_{H_{\lambda}^1}\\
&\lesssim\|\mathcal{N}[u]\|_{\mathcal{W}_{\lambda}'([t,\infty])} \\
&\lesssim \|u\|^{2p-1}_{\mathcal{W}_{\lambda}([t,\infty])} \quad \rightarrow \quad 0
\end{align*}
as $t\rightarrow {\infty}$. 
{The scattering is proved.}
\section{Ground states and Gagliardo-Nirenberg estimate}
In this section, we prove Theorem \ref{gag} dealing with the existence of ground states solutions to \eqref{E} and the Gagliardo-Nirenberg type estimate \eqref{gagg}.
\subsection{Gagliardo-Nirenberg estimate}
Using the Hardy-Littlewood-Sobolev inequality (Lemma \ref{hls}), we first see
\begin{equation}\label{ie}
\int_{\mathbb{R}^n} |x|^{-\tau}|u|^p (I_\alpha \ast |\cdot|^{-\tau}|u|^p) dx \lesssim \big\||x|^{-\tau}|u|^p\big\|^2_{{\frac{2n}{\alpha+n}}}
\end{equation}
if $0<\alpha<n.$
Applying Lemma \ref{ckn} to the right-hand side of \eqref{ie} with $b=-\frac{\tau}{p}$, $q=\frac{2np}{\alpha+n}$, $a=0$ and $p=2$, we get
\begin{equation}\label{5.100'}
\big\||x|^{-\tau}|u|^p\big\|^2_{{\frac{2n}{\alpha+n}}}= \big\||x|^{-\frac{\tau}{p}}u\big\|^{2p}_{{\frac{2np}{\alpha+n}}} \lesssim \|\nabla u\|^{2p}
\end{equation}
if 
\begin{equation}\label{iee}
	0<\frac{\alpha+n}{2np} \leq \frac12 <1, \quad -\frac{\alpha+n}{2p}<-\frac{\tau}{p} \leq 0, \quad \frac{\tau}{p}-1=\frac{\alpha+n}{2p}-\frac{n}2.
\end{equation}
Finally, using the equivalent norm to Sobolev one (see Lemma \ref{2.2}), we obtain the desired estimate \eqref{inte} if $\frac{1+\kappa}{n}<\frac12<\min\{1, 1-\frac{\kappa}{n}\}$ which does not affect the assumptions in \eqref{as}.

Now it remains to derive the assumptions in \eqref{as}.  
We note that the last equality in \eqref{iee} is equivalent to $p=1+\frac{2-2\tau+\alpha}{n-2}$.
Using $p=1+\frac{2-2\tau+\alpha}{n-2}$, the requirements \eqref{iee} can be written as
\begin{equation}\label{ie1}
	\alpha+n>0, \quad \alpha+n \ge n\tau, \quad 0\leq 2\tau<\alpha+n,
\end{equation}
which are reduced to $0<\tau\leq 1+\frac{\alpha}{n}$ since $\alpha+n\geq n\tau>2\tau>0$, as desired.

\subsection{Existence of ground states}
By using \eqref{gagg}, we first set $J(u)=\|\sqrt{\mathcal K_\lambda}u\|^{2p}/\|\mathcal P[u]\|$ and take a sequence $\{u_n\}_{n\in\mathbb{N}}$ in $H_{\lambda}^1$ such that 
\begin{align*}
	\gamma := \frac1{C_{n,\tau,\alpha,\lambda}}
	=\lim_{n\rightarrow \infty} \frac{\|\sqrt{\mathcal{K}_{\lambda}}u_n\|^{2p}}{\mathcal{P}[u_n]}.
\end{align*}
By the scaling $u(x) \mapsto u^{\delta,\mu}(x)=\delta u(\mu x)$ for $\delta, \mu \in \mathbb{R}$, we have
\begin{align*}
	\|u^{\delta,\mu}\|^2&=\delta^2 \mu^{-n}\|u\|^2 \\
	\|\sqrt{\mathcal K_{\lambda}}u^{\delta,\mu}\|^2&= \|\nabla u^{\delta,\mu}\|^2+\lambda\big\|\frac{u^{\delta,\mu}}{|x|}\big\|^2	\\
	&=\delta^2 \mu^{2-n}\Big(\|\nabla u\|^2+\lambda\big\|\frac{u}{|x|}\big\|^2\Big)=\delta^2 \mu^{2-n}\|\sqrt{\mathcal K_\lambda}u\|^2 \\
	\int_{\mathbb{R}^n}|x|^{-\tau}|u^{\delta,\mu}|^p(I_\alpha \ast |\cdot|^{-\tau}|u^{\delta,\mu}|^p) dx &= \delta^{2p}\mu^{2\tau-n-\alpha}\int_{\mathbb{R}^n}|x|^{-\tau}|u|^p(I_\alpha \ast |\cdot|^{-\tau}|u|^p) dx,
\end{align*}
which implies that $J(u^{\delta,\mu})=J(u)$ by $p=1+\frac{2-2\tau+\alpha}{n-2}$.
Let $\psi_n = u_n^{\delta_n,\mu_n}$ where 
$$\delta_n= \frac{\|u_n\|^{\frac{n}2-1}}{\|\sqrt{\mathcal K_\lambda} u_n\|^{\frac{n}{2}}}, \quad \mu_n=\frac{\|u_n\|}{\|\sqrt{\mathcal K_\lambda} u_n\|}.$$
Then, we have
$$\|\psi_n\|=\|\sqrt{\mathcal K_\lambda} \psi_n\|=1 \quad \textnormal{and} \quad \gamma=\lim_{n\rightarrow\infty} J(\psi_n)=\lim_{n\rightarrow \infty} \frac1{\mathcal P[\psi_n]}.$$ 

Now we take $\psi \in H_\lambda^1$ so that $\psi_n \rightharpoonup \psi$ in $H_\lambda^1$ and we will show that 
$$\frac{1}{\mathcal P[\psi_n]} \rightarrow \frac1{\mathcal P[\psi]} \quad \textnormal{as} \quad n\rightarrow \infty.$$
By using Lemma \ref{hls} {via \eqref{5.100'}}, we have 
\begin{align}
\nonumber
&\int_{\mathbb{R}^n} |x|^{-\tau}|\psi_n|^p (I_\alpha \ast |\cdot|^{-\tau}|\psi_n|^p)- |x|^{-\tau}|\psi|^p (I_\alpha \ast |\cdot|^{-\tau}|\psi|^p)dx \\
\nonumber
&\quad =\int_{\mathbb{R}^n} |x|^{-\tau}|\psi|^p \big(I_\alpha \ast |\cdot|^{-\tau}(|\psi_n|^p - |\psi|^p)\big) dx\\
\nonumber
&\qquad \qquad \qquad+ \int_{\mathbb{R}^n}|x|^{-\tau}(|\psi_n|^p-|\psi|^p)(I_\alpha \ast |\cdot|^{-\tau}|\psi_n|^p)dx\\
\nonumber
&\quad\lesssim \Big(\big\||x|^{-\tau}|\psi|^p\big\|_{{\frac{2n}{\alpha+n}}} +\big\||x|^{-\tau}|\psi_n|^p\big\|_{{\frac{2n}{\alpha+n}}}\Big)
\big\||x|^{-\tau}(|\psi_n|^p-|\psi|^p)\big\|_{{\frac{2n}{\alpha+n}}}\\
\label{dif}
&\quad\lesssim \big\||x|^{-\tau}(|\psi_n|^p-|\psi|^p)\big\|_{{\frac{2n}{\alpha+n}}}.
\end{align}
Using the following simple inequality 
$$|u|^p-|v|^p \lesssim |u-v|(|u|^{p-1}+|v|^{p-1}), \quad p\ge1$$
and H\"older's inequality, the last term in \eqref{dif} is bounded as 
\begin{align}\label{i}
\nonumber
\big\||x|^{-\tau}(|\psi_n|^p-|\psi|^p)\big\|_{{\frac{2n}{\alpha+n}}} 
&\lesssim (\|\psi_n\|_{{(p-1)a_1}}^{p-1}+\|\psi\|_{{(p-1)a_1}}^{p-1}) \||x|^{-\tau}|\psi-\psi_n|\|_{{a_2}}\\
&\lesssim (\|\psi\|_{H_\lambda^1}^{p-1}+\|\psi_n\|_{H_\lambda^1}^{p-1})\||x|^{-\tau}|\psi-\psi_n|\|_{{a_2}}
\end{align}
if $0<\tau<2$ and 
\begin{equation}\label{gagc}
	\frac{\alpha+n}{2n}=\frac{1}{a_1}+\frac{1}{a_2}, \quad \frac{n-2}{2n}\leq\frac{1}{(p-1)a_1}\leq\frac12.
\end{equation}
{Indeed, for the last inequality we used the Sobolev embedding, $H^1(\mathbb{R}^n) \hookrightarrow L^q({\mathbb{R}^n})$ for $2\leq q \leq \frac{2n}{n-2}$ if $n\ge3$.}
Thanks to the compactness of the Sobolev injection, Lemma \ref{compact}, under the condition
\begin{equation}\label{gagcc}
	\frac{n-2}{2(n-\tau)}<\frac{1}{a_2}<\frac12,
\end{equation}
we then get $1/{\mathcal P[\psi_n]} \rightarrow 1/{\mathcal P[\psi]}=\gamma$ as $n \rightarrow \infty$. 
We need to check that there exist $a_1$ and $a_2$ satisfying \eqref{gagc}, \eqref{gagcc} and the assumptions in Theorem \ref{gag}, but we will postpone this until the end of the proof.

By the lower semi-continuity of the norm, we see
$$\|\psi\|\leq 1 \quad \textnormal{and} \quad \|\sqrt{\mathcal K_\lambda} \psi\|\leq 1,$$
from which $J(\psi)<\gamma$, and hence $\|\psi\|=\|\sqrt{\mathcal K_\lambda}\psi\|=1.$
Consequently, 
$$\psi_n \rightarrow \psi \quad \textnormal{in} \quad H_\lambda^1 \quad \textnormal{and} \quad \gamma=J(\psi)=\frac1{\mathcal P[\psi]}.$$
$\psi$ satisfies \eqref{E} because the minimizer satisfies the Euler equation 
$$\partial_\epsilon J(\psi+\epsilon \eta)_{|\epsilon=0}=0, \quad \forall \eta \in C_0^{\infty} \cap H_\lambda^1.$$

It remains to check the existence of $a_1$ and $a_2$ satisfying the conditions \eqref{gagc}, \eqref{gagcc} under the assumptions in Theorem \ref{gag}.
Substituting the first condition in \eqref{gagc} into the second one of \eqref{gagc} with $p=1+\frac{2-2\tau+\alpha}{n-2}$, we see
{\begin{equation}\label{gagc2}
	\frac{\alpha+n}{2n}-\frac{2-2\tau+\alpha}{2(n-2)}\leq\frac1{a_2} \leq \frac{\alpha+n}{2n}-\frac{2-2\tau+\alpha}{2n}.
	\end{equation}}
To eliminate $a_2$, we make the lower bounds of $1/a_2$ of \eqref{gagcc} and \eqref{gagc2} less than the upper ones of $1/a_2$ of \eqref{gagcc} and \eqref{gagc2}.
{Indeed, starting the process from the lower bound in \eqref{gagc2}, we arrive at $n\tau<\alpha+n$ which is satisfied by the assumption \eqref{as}.
Similarly from the lower bound in \eqref{gagcc}, we arrive at $0<\tau<\frac{n+2}{2}$, but this is eliminated by \eqref{as} using the facts that $n\ge3$ and $\tau<2$.}
\section{Blow-up of the energy solutions}
In this section, we prove Theorem \ref{t1} which provides a criterion for blow-up phenomena in the energy-critical focusing regime under the threshold of the ground state. As a consequence, we establish Corollary \ref{t2}. Moreover, we prove Proposition \ref{s} and Corollary \ref{s2} about energy bounded solutions.
\subsection{Criterion for blow-up}
In order to prove Theorem \ref{t1}, we use proof by contradiction through the following inequality which will be proved:
\begin{equation}\label{qq}
	V_{R}''\leq4\,\mathcal I[u]+\frac C{R^{2\tau}}+\frac C{R^2},
\end{equation}
where $\mathcal I [u] = \|\sqrt{\mathcal K_\lambda} u\|^2 - \mathcal P[u]$ and $R\gg1$.
Indeed, taking $u_0\in H^1_\lambda$ with \eqref{ss'} and assuming that $u$ is global, for $R\gg1$,
$$V_{R}''\leq4\,\mathcal I[u]+\frac C{R^{2\tau}}+\frac C{R^2}<-c<0$$
if there is no sequence $t_n\to\infty$ such that $\|\sqrt{\mathcal K_\lambda} u(t_n)\|\to \infty$, which is contradiction.

Before starting to prove \eqref{qq}, we first define $\phi_R(\cdot):=R^2\phi(\frac{\cdot}{R})$, $R>0$, where the radial function $\phi\in C_0^\infty(\R^n)$ satisfies
$$\phi(|x|)=\phi(r):=\left\{
\begin{array}{ll}
	\frac{r^2}2,\quad\mbox{if}\quad r\leq1 ;\\
	0,\quad\mbox{if}\quad r\geq2,
\end{array}
\right.\quad\mbox{and}\quad \phi''\leq1.$$
Then, $\phi_R$ satisfies
$$\phi_R''\leq1,\quad \phi_R'(r)\leq r,\quad\Delta \phi_R\leq N$$
and, for $|x|\leq R$
\begin{align}\label{calc}
	\nabla\phi_R(x)=x,\quad\Delta\phi_R(x)=N.     
\end{align}

By recalling the definition of $V(t)$ and $M(t)$ in Section 2, we denote the localized variance and Morawetz action as
\begin{align*}
	V_R(t):=\int_{\R^n}\phi_R(x)|u(x,\cdot)|^2\,dx, \quad 	V_R'(t)=M_R(t):=2\Im\int_{\R^n}\bar u\nabla \phi_R \cdot \nabla udx.
\end{align*}
By Proposition \ref{mrwz}, we divide $M_R'$ into two parts, $A$ and $B$, as $M_R'(t)=A + B$
where
$$A=4\sum_{k,l=1}^{N}\int_{\R^n}\partial_l\partial_k\phi_R\Re(\partial_ku\partial_l\bar u)dx-\int_{\R^n}\Delta^2\phi_R|u|^2dx+4\lambda\int_{\R^n}\nabla\phi_R\cdot x\frac{|u|^2}{|x|^4}dx$$
and 
\begin{align}
	\nonumber
	B&=-\frac{2(p-2)}{p}\int_{\R^n}\Delta\phi_R|x|^{-\tau}|u|^p(I_\alpha*|\cdot|^{-\tau}|u|^{p})dx\\
	\nonumber
	&\quad\qquad -\frac{4\tau}p\int_{\R^n}x\cdot\nabla\phi_R|x|^{-\tau-2}|u|^{p}(I_\alpha*|\cdot|^{-\tau}|u|^p)dx\\
	\nonumber
	&\qquad\qquad\qquad -\frac{4(N-\alpha)}p\sum_{k=1}^N\int_{\R^n}|x|^{-\tau}|u|^{p}\partial_k\phi_R(\frac{x_k}{|\cdot|^2}I_\alpha*|\cdot|^{-\tau}|u|^p)dx\\
	\label{123}
	&=:B_1 + B_2+ B_3.
\end{align}

Using the following radial relations
\begin{equation}\label{''}
	\partial_k=\frac{x_k}r\partial_r,\quad\partial_l\partial_k=\Big(\frac{\delta_{lk}}r-\frac{x_lx_k}{r^3}\Big)\partial_r+\frac{x_lx_k}{r^2}\partial_r^2
\end{equation}
and the Cauchy-Schwarz inequality via the properties of $\phi$,
it follows that
\begin{align}
	\nonumber
	A&= 4\int_{\mathbb{R}^N} |\nabla u|^2 \frac{\phi_R'}{r}dx + 4 \int_{\mathbb{R}^N}
	|x\cdot \nabla u|^2 \big(\frac{\phi_R''}{r^2}-\frac{\phi_R'}{r^3}\big) dx \\
	\nonumber
	&\qquad \qquad\qquad \qquad\qquad \qquad\quad-\int_{\mathbb{R}^N} \Delta^2\phi_R|u|^2dx + 4 \int_{\mathbb{R}^N} \frac{|u|^2}{r^3}\phi_R' dx \\
	\nonumber
	&\leq4\int_{\R^n}|\nabla u|^2\frac{\phi_R'}r\,dx+4\int_{\R^n}\frac{|x\cdot\nabla u|^2}{r^2}\big(1-\frac{\phi_R'}r\big)dx\\
	\nonumber
	&\qquad \qquad \qquad \qquad\qquad \qquad\quad -\int_{\R^n}\Delta^2\phi_R|u|^2\,dx+4\lambda\int_{\R^n}\frac{|u|^2}{r^3}\phi_R'dx\nonumber\\
	\label{(I)}
	&\leq4\int_{\R^n}|\nabla u|^2dx-\int_{\R^n}\Delta^2\phi_R|u|^2dx+4\lambda\int_{\R^n}\frac{|u|^2}{r^2}dx.
\end{align}

On the other hand, to handle the part $B$, we split the integrals in $B$ into the regions $|x|<R$ and $|x|>R$.
Then, by \eqref{calc}, the first two terms in $B$ are written
\begin{align}
	\nonumber
	B_1+B_2&=\frac{2N(2-p)-4\tau}{p}\int_{|x|<R}|x|^{-\tau}|u|^p(I_\alpha*|\cdot|^{-\tau}|u|^{p})dx\\
	\nonumber
	&\qquad \qquad \qquad \qquad +O\bigg(\int_{|x|>R}|x|^{-\tau}|u|^{p}(I_\alpha*|\cdot|^{-\tau}|u|^p)dx\bigg)\\
	&=\frac{2(N(2-p)-2\tau)}p\mathcal{P}[u]+O\bigg(\int_{|x|>R}|x|^{-\tau}|u|^{p}(I_\alpha*|\cdot|^{-\tau}|u|^p)dx\bigg).\label{12}
\end{align}
For the third term $B_3$, with calculus done in \cite[Lemma 4.5]{st4}, we have
\begin{align}
	B_3&=\frac{2(\alpha-N)}{p}\int_{|y|<R}\int_{|x|<R}I_\alpha(x-y)|y|^{-\tau}|u(y)|^p|x|^{-\tau}|u(x)|^{p}\,dx\,dy\nonumber\\
	&\qquad \qquad \qquad \qquad \qquad \qquad +O\bigg(\int_{|x|>R}(I_\alpha*|\cdot|^{-\tau}|u|^p)|x|^{-\tau}|u|^pdx\bigg)\nonumber\\
	&=\frac{2(\alpha-N)}{p}\int_{|x|<R}|x|^{-\tau}|u(x)|^{p}(I_\alpha*|\cdot|^{-\tau}|u|^p)dx \nonumber\\
	&\qquad \qquad \qquad \qquad \qquad \qquad+O\bigg(\int_{|x|>R}(I_\alpha*|\cdot|^{-\tau}|u|^p)|x|^{-\tau}|u|^pdx\bigg)\nonumber\\
	&=\frac{2(\alpha-N)}{p}\mathcal P[u]+O\bigg(\int_{|x|>R}|x|^{-\tau}|u|^p(I_\alpha*|\cdot|^{-\tau}|u|^p)dx\bigg).\label{372}
\end{align} 
Combining \eqref{(I)}, \eqref{123}, \eqref{12} and \eqref{372}, we then obtain
\begin{align}
	M_R'&\leq-\int_{\R^n}\Delta^2\phi_R|u|^2\,dx+4\int_{\R^n}|\nabla u|^2+4\lambda\int_{\R^n}\frac{|u|^2}{r^2}\,dx
	-4\mathcal P[u]\nonumber\\
	&\qquad \qquad \qquad \qquad \qquad \qquad \quad  +O\left(\int_{|x|>R}|x|^{-\tau}|u|^{p}(I_\alpha*|\cdot|^{-\tau}|u|^p)\,dx\right)\nonumber\\
	&\leq4\Big(\|\sqrt{\mathcal K_\lambda} u\|^2-\mathcal P[u]\Big)+O\left(\int_{|x|>R}|x|^{-\tau}|u|^{p}(I_\alpha*|\cdot|^{-\tau}|u|^p)\,dx\right)+O(R^{-2}).\nonumber
\end{align}
Here, for the last inequality we used the fact that $|\partial^{\nu}\phi_R|\lesssim R^{2-|\nu|}$.

Now, using Lemma \ref{hls} and Lemma \ref{ckn} with $b=-\frac{\tau}{p}$, $q=\frac{2Np}{\alpha+N}$, $a=0$ and $p=2$, we obtain
\begin{eqnarray*}
	\int_{|x|>R}|x|^{-\tau}|u|^{p}(I_\alpha*|\cdot|^{-\tau}|u|^p)\,dx 
	&\lesssim& \||x|^{-\tau}|u|^{p}\|^2_{\frac{2N}{\alpha+N}}\\
	&\lesssim& R^{-2\tau}\|u\|_{\frac{2Np}{\alpha+N}}^{2p}\\
	&\lesssim& R^{-2\tau}\|\nabla u\|^{2p}
\end{eqnarray*}
if 
\begin{equation}\label{q}
	0<\frac{\alpha+n}{2np}\leq \frac12<1 , \quad -\frac{\alpha+n}{2p}<-\frac{\tau}{p}\leq 0, \quad \frac{\tau}{p}-1=\frac{\alpha+n}{2p}-\frac{n}{2}.
\end{equation}
Here we note that the last equality in \eqref{q} is equivalent to $p=1+\frac{2-2\tau+\alpha}{n-2}$.
Using $p=1+\frac{2-2\tau+\alpha}{n-2}$, the requirement \eqref{q} can be written as \eqref{ie1}
which are reduced to $0<\tau\leq 1+\frac{\alpha}{n}$ since $\alpha+n\geq n\tau>2\tau>0$.

Consequently, for large $R\gg1$, using the equivalent norm to Sobolev one (see Lemma \ref{2.2}), we get
\begin{equation*}
	M_R'\leq4\,\mathcal I[u]+\frac C{R^{2\tau}}\|\sqrt{\mathcal K_\lambda} u\|^{2p}+\frac C{R^2},
\end{equation*}
as desired.

\subsection{The boundedness of the energy solution}
Now, we prove Proposition \ref{s}.
Specifically, the energy bound is demonstrated by combining the conservation law with the following lemma known as coercivity (or energy trapping) results, obtained through the assumption \eqref{ss}.
\begin{lem}\label{bnd'}
Let $\varphi\in H_\lambda^1$ be a ground state solution to \eqref{E}.
Assume that there is $0<c<1$ satisfying
$$\mathcal P[u]<c \mathcal P[\varphi], \quad u\in H_\lambda^1.$$
Then there exists a constant $c_\varphi>0$ such that 
\begin{align*}
\|\sqrt{\mathcal K_\lambda} u\|^2&<c_\varphi\mathcal E[u].
\end{align*}
\end{lem}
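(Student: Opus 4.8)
The plan is to combine the sharp Gagliardo--Nirenberg inequality \eqref{gagg} with the two algebraic identities satisfied by the ground state $\varphi$, and then to exploit the sub-threshold hypothesis $\mathcal P[u]<c\,\mathcal P[\varphi]$ via a dichotomy on the size of $\|\sqrt{\mathcal K_\lambda}u\|$. First I would record the identities carried by $\varphi$: testing the elliptic equation \eqref{E} against $\varphi$ and integrating by parts gives $\|\sqrt{\mathcal K_\lambda}\varphi\|^2=\mathcal P[\varphi]$ (that is, $\mathcal I[\varphi]=0$), while the fact that $\varphi$ realizes the infimum in \eqref{min} means it saturates \eqref{gagg}, so that $\mathcal P[\varphi]=C_{N,\tau,\alpha,\lambda}\|\sqrt{\mathcal K_\lambda}\varphi\|^{2p}$. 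Eliminating $\mathcal P[\varphi]$ between these two relations yields $C_{N,\tau,\alpha,\lambda}\|\sqrt{\mathcal K_\lambda}\varphi\|^{2(p-1)}=1$, and substituting this back into \eqref{gagg} for a general $u$ produces the scale-free Gagliardo--Nirenberg inequality
\[
\frac{\mathcal P[u]}{\mathcal P[\varphi]}\le\Big(\frac{\|\sqrt{\mathcal K_\lambda}u\|^{2}}{\|\sqrt{\mathcal K_\lambda}\varphi\|^{2}}\Big)^{p}.
\]

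The central step is to upgrade this into the linear comparison $\mathcal P[u]\le\|\sqrt{\mathcal K_\lambda}u\|^{2}$, i.e. $\mathcal I[u]\ge0$. Writing $\rho:=\|\sqrt{\mathcal K_\lambda}u\|^{2}/\|\sqrt{\mathcal K_\lambda}\varphi\|^{2}$ and using $\mathcal P[\varphi]=\|\sqrt{\mathcal K_\lambda}\varphi\|^{2}$, I would split into two regimes. If $\rho\le1$, then $\rho^{p}\le\rho$, so the displayed inequality already gives $\mathcal P[u]/\mathcal P[\varphi]\le\rho$, hence $\mathcal P[u]\le\|\sqrt{\mathcal K_\lambda}u\|^{2}$. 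If $\rho>1$, the hypothesis gives $\mathcal P[u]/\mathcal P[\varphi]<c<1<\rho$ directly, again yielding $\mathcal P[u]\le\|\sqrt{\mathcal K_\lambda}u\|^{2}$. Feeding $\mathcal P[u]\le\|\sqrt{\mathcal K_\lambda}u\|^{2}$ into the focusing energy $\mathcal E[u]=\|\sqrt{\mathcal K_\lambda}u\|^{2}-\tfrac1p\mathcal P[u]$ then gives $\mathcal E[u]\ge\tfrac{p-1}{p}\|\sqrt{\mathcal K_\lambda}u\|^{2}$, which is exactly the claim with $c_\varphi=\tfrac{p}{p-1}$. The strict inequality in the statement holds because, for $u\neq0$, one in fact has $\mathcal P[u]<\|\sqrt{\mathcal K_\lambda}u\|^{2}$ strictly, coming from $\rho^{p}<\rho$ when $\rho<1$ and from the hypothesis $\mathcal P[u]<c\,\mathcal P[\varphi]$ when $\rho\ge1$.

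The point requiring care is precisely this dichotomy. The Gagliardo--Nirenberg bound \eqref{gagg} on its own is too weak to yield coercivity when $\|\sqrt{\mathcal K_\lambda}u\|$ is large, since its right-hand side grows like $\|\sqrt{\mathcal K_\lambda}u\|^{2p}$ rather than $\|\sqrt{\mathcal K_\lambda}u\|^{2}$; it is only the sub-threshold hypothesis that tames the large-norm regime. The elementary comparison $\rho^{p}\le\rho$, valid exactly for $\rho\le1$, is what glues the two inputs together with no loss, and the ground-state identity $\mathcal P[\varphi]=\|\sqrt{\mathcal K_\lambda}\varphi\|^{2}$ is what makes the normalization match. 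Once $\mathcal I[u]\ge0$ is in hand, the remainder is a one-line substitution into the conservation of energy.
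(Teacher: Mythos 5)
Your proof is correct, and it uses the same two ingredients as the paper—the Pohozaev identity $\mathcal P[\varphi]=\|\sqrt{\mathcal K_\lambda}\varphi\|^2$ obtained by testing \eqref{E} against $\varphi$, and the expression $C_{N,\tau,\alpha,\lambda}=(\mathcal P[\varphi])^{1-p}$ of the sharp constant coming from $\varphi$ being a minimizer of \eqref{min}—but the way you exploit the sub-threshold hypothesis is genuinely different. The paper never splits into cases: it multiplies the Gagliardo--Nirenberg bound \eqref{gagg} by $(\mathcal P[u])^{p-1}$, uses \eqref{poh}, and takes $p$-th roots to obtain the unconditional inequality $\mathcal P[u]\le\big(\mathcal P[u]/\mathcal P[\varphi]\big)^{(p-1)/p}\|\sqrt{\mathcal K_\lambda}u\|^2$, into which the hypothesis is inserted exactly once, giving $\mathcal E[u]\ge\big(1-\tfrac{1}{p}c^{(p-1)/p}\big)\|\sqrt{\mathcal K_\lambda}u\|^2$ and hence the $c$-dependent constant $c_\varphi=\big(1-\tfrac{1}{p}c^{(p-1)/p}\big)^{-1}$. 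Your route instead passes through the intermediate statement $\mathcal I[u]\ge 0$ via the dichotomy $\rho\le 1$ versus $\rho>1$ (with the elementary comparison $\rho^p\le\rho$ gluing the two regimes), which yields the $c$-independent constant $c_\varphi=\tfrac{p}{p-1}$. Both are valid for the lemma as stated; the trade-off is that the paper's argument is dichotomy-free and quantifies how coercivity improves as $c$ decreases (its constant is strictly smaller than $\tfrac{p}{p-1}$ for every $c<1$ and tends to $\tfrac{p}{p-1}$ as $c\to1$), whereas yours is more elementary and makes explicit the virial-sign information $\mathcal I[u]\ge 0$, which the paper only obtains implicitly.
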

\begin{proof}
Thanks to \eqref{E}, we first see
\begin{equation}\label{p}
\mathcal P[\varphi]:=\int_{\R^n} |x|^{-\tau}|\varphi|^p(I_\alpha *|\cdot|^{-\tau}|\varphi|^p)dx=\|\sqrt{\mathcal K_\lambda}\varphi\|^2.
\end{equation}

Applying the Gagliardo-Nirenberg inequality, Theorem \ref{gag}, we have
\begin{align*}
(\mathcal P[u])^p \leq (\mathcal P[u])^{p-1}\cdot C_{N,\tau,\alpha,\lambda} \|\sqrt{\mathcal K_\lambda} u\|^{2p}\leq \Big(\frac{\mathcal P[u]}{\mathcal P[\varphi]}\Big)^{p-1}\|\sqrt{\mathcal K_\lambda} u\|^{2p}.
\end{align*}
Here, for the second inequality, we used the fact that
\begin{equation}\label{poh}
	C_{N,\tau,\alpha,\lambda}=\frac{\mathcal P(\varphi)}{\|\sqrt{\mathcal K_\lambda}\varphi\|^{2p}}=({\mathcal P[\varphi]})^{1-p}
\end{equation}
by using \eqref{p}.
Therefore, we obtain
\begin{eqnarray*}
\mathcal P[u]
&\leq&\Big(\frac{\mathcal P[u]}{\mathcal P[\varphi]}\Big)^\frac{p-1}p\|\sqrt{\mathcal K_\lambda} u\|^2,
\end{eqnarray*}
which implies that
\begin{align*}
\mathcal E[u]&=\|\sqrt{\mathcal K_\lambda} u\|^2-\frac1p\mathcal P[u]\\
&\geq\Big(1-\frac{1}p\Big(\frac{\mathcal P[u]}{\mathcal P[\varphi]}\Big)^\frac{p-1}p\Big)\|\sqrt{\mathcal K_\lambda} u\|^2\\
&\geq\Big(1-\frac{c^{(p-1)/p}}p\Big)\|\sqrt{\mathcal K_\lambda} u\|^2.
\end{align*}
{This concludes the proof.}
\end{proof}

\subsection{Energy bounded/non-global solutions}
Finally, we prove Corollaries \ref{t2} and \ref{s2}, which presents the dichotomy of energy bounded/non-global existence of solutions.
\subsubsection{Energy bounded solutions}
First, Corollary \ref{s2} follows from the invariance of \eqref{t11} and \eqref{t12} under the flow of \eqref{S}.
We first define a function $f:[0,T^\ast) \rightarrow \mathbb{R}$ as
\begin{equation}\label{def}
	f(t)= t- \frac{C_{N,\tau,\alpha,\lambda}}{p} t^{p}.
\end{equation}
Since $p>1$, the function $f(t)$ has a maximum value $f(t_1)=\frac{p-1}{p}C_{N,\tau,\alpha,\lambda}^{-\frac1{p-1}}$ at $t_1= (C_{N,\tau,\alpha,\lambda})^{-\frac1{p-1}}$.
We note here that $t_1=\|\sqrt{\mathcal K_\lambda} \varphi\|^2$ by \eqref{poh} and \eqref{p}.

Using the Gagliardo-Nirenberg type inequality, Theorem \ref{gag}, we see 
\begin{align}
	\mathcal E[u]=\|\sqrt{\mathcal K_\lambda} u\|^2 - \frac1p \mathcal P[u] &\geq\|\sqrt{\mathcal K_\lambda} u\|^2-\frac{C_{N,\tau,\alpha,\lambda}}{p}\|\sqrt{\mathcal K_\lambda} u\|^{2p}\label{xxx}\\
	&=f\big(\|\sqrt{\mathcal K_\lambda} u\|^2\big).\nonumber
\end{align}
By the assumption \eqref{t11} with \eqref{poh} and \eqref{p}, we also see
\begin{equation}\label{x}
	\mathcal E[u_0]<\mathcal E[\varphi] =f(t_1),
\end{equation}
which implies 
\begin{equation}\label{xx}
	f\big(\|\sqrt{\mathcal K_\lambda}u\|^2\big) \leq \mathcal E[u]= \mathcal E[u_0] <f(t_1).
\end{equation}
Since $\|\sqrt{\mathcal K_\lambda}u_0\|^2<\|\sqrt{\mathcal K_\lambda}\varphi\|^2=t_1$ by the assumption \eqref{t12}, and the continuity in time with \eqref{xx}, we get
$$\|\sqrt{\mathcal K_\lambda}u(t)\|^2 < t_1, \quad \forall t \in [0,T^\ast),$$
which is equivalent to $$\mathcal{MG}[u(t)]<1, \quad \forall t \in [0,T^\ast).$$
Therefore \eqref{t11} and \eqref{t12} are invariant under the flow \eqref{S} and this implies that $T^\ast=\infty$,
which concludes the proof.

\subsubsection{Blow-up }
To prove Corollary \ref{t2}, we use the same function $f(t)$ defined as \eqref{def}.
By the assumption \eqref{t13} with \eqref{poh} and \eqref{p}, we have
$$\|\sqrt{\mathcal K_\lambda} u_0\|^2>\|\sqrt{\mathcal K_\lambda}\varphi\|^2=t_1.$$
Thus, the continuity in time with \eqref{xx} gives
$$ \|\sqrt{\mathcal K_\lambda} u (t)\|^2>t_1,\quad \forall\, t\in [0,T^*) .$$
Hence, $\mathcal{MG}[u(t)]>1$ on $[0,T^*)$, and this and \eqref{t11} are invariant under flow \eqref{S}.

Finally, by using $\mathcal{E}[u(t)]>1$, $\mathcal{MG}[u(t)]>1$ and the identity $p\mathcal E[\varphi]=(p-1)\|\sqrt{\mathcal K_\lambda}\varphi\|^2$, we obtain for all $t\in[0,T^\ast)$
\begin{align*}
	\mathcal I[u(t)]&=\|\sqrt{\mathcal K_\lambda} u\|^2-\mathcal P[u]\\
	&=p\mathcal E[u]-(p-1)\|\sqrt{\mathcal K_\lambda} u\|^2\\
	&< p\mathcal E[\varphi]-(p-1)\|\sqrt{\mathcal K_\lambda}\varphi\|^2<0, 
\end{align*}
which concludes the proof by using Theorem \ref{t1}.

\section{Appendix: Morawetz estimate}
In this section, we present a virial identity (Proposition \eqref{mrwz}) that exhibits the convexity property in time for certain quantities associated with solutions of the generalized Hartree equation \eqref{S}. This identity serves as the basis for studying blow-up phenomena. The virial identity for the free nonlinear Schr\"odinger equation was first established by Zakharov \cite{Za} and Glassey \cite{G}. When the free equation is perturbed by an electromagnetic potential, Fanelly and Vega \cite{FV} derived the corresponding virial identities for the linear Schr\"odinger and linear wave equations. The proof relies on the standard technique of Morawetz multipliers, introduced in \cite{M} for the Klein-Gordon equation. The identity we present here is the same as that in \cite{sx}, with the addition of a term corresponding to the contribution from the inverse square potential.

\begin{proof}[Proof of Proposition \eqref{mrwz}]
Let $u \in C_t([0,T];H_\lambda^1)$ be a solution to the focusing case of equation \eqref{S}
\begin{align}\label{S1}
	i\partial_t u  
	&=-\Delta u + \frac\lambda{|x|^2}u -|x|^{-\tau}|u|^{p-2}\Big(I_\alpha*|\cdot|^{-\tau}|u|^p\Big)u \\
\nonumber
	&=-\Delta u + \frac\lambda{|x|^2}u -\mathcal N.
\end{align}
By multiplying $2 \bar u$ to \eqref{S1}, we obtain 
\begin{equation*}
-2 \Im{(\bar u \Delta u)} = \partial_t(|u|^2)
\end{equation*}
Using this, we can compute
$$V_\phi'(t)=2\Im\int_{\R^n} \bar u \nabla\phi\cdot\nabla u dx=2\sum_{k=1}^N \Im\int_{\R^n} \bar u \partial_k\phi\cdot \partial_k u dx.$$

In order to consider the second derivative of $V_\xi$, we need to compute 
\begin{equation}\label{sec}
V''_{\phi}(t)=2\sum_{k=1}^N \int_{\R^n}  \partial_k\phi\cdot \partial_t \Im(\bar u \partial_k u) dx.
\end{equation}
Using \eqref{S1}, we have  
\begin{align}
\partial_t\Im(\bar u \partial_k u )
&=\Re(i\partial_t u \partial_k\bar u )-\Re(i\bar{u} \partial_k \partial_t u )\nonumber\\
&=\Re\big(\partial_k\bar u (-\Delta u + \frac\lambda{|x|^2}u -\mathcal N)\big)-\Re\big(\bar u \partial_k(-\Delta u + \frac\lambda{|x|^2}u -\mathcal N)\big)\nonumber\\
&=\Re\big(\bar u \partial_k\Delta u -\Delta u\partial_k\bar u \big)+\Re\big(\bar u \partial_k\mathcal N-\mathcal N\partial_k\bar u \big)+\lambda\Re\big(\frac{u}{|x|^2}\partial_k\bar u-\bar u \partial_k(\frac{u }{|x|^2}) \big).\label{vr}
\end{align}
Here, for the last term, we see
\begin{equation}\label{aa}
	\Re\big(\bar u \partial_k(\frac{u }{|x|^2})-\frac{u}{|x|^2}\partial_k\bar u  \big)=-2\frac{x_k}{|x|^4}{|u|^2}.
\end{equation}
For the first two term, we will apply the following lemma, omitted here, can be found in the proof of \cite[Proposition 2.12]{sx}.
\begin{lem}\label{lem}
Let $\phi:\mathbb{R}^N \rightarrow \mathbb{R}$ be a radial, real-valued multiplier with $\phi=\phi(|x|)$. 
Then, for $\mathcal N$ defined as $\mathcal N = -|x|^{-\tau}|u|^{p-2}\big(J_\alpha*|\cdot|^{-\tau}|u|^p\big)u$, we have
\begin{align*}
&\Re\int_{\mathbb{R}^N}(\bar u \partial_k\Delta u -\Delta u\partial_k\bar u )+(\bar u \partial_k\mathcal N-\mathcal N\partial_k\bar u ) dx\\
&\qquad\qquad =\sum_{l=1}^{N}2\int_{\R^n}\partial_l\partial_k\phi\,\Re(\partial_ku\partial_l\bar u)dx-\frac12\int_{\R^n}\Delta^2\phi|u|^2dx\\
&\qquad \qquad\qquad-\frac{(p-2)}{p}\int_{\R^n}\Delta\phi|x|^{-\tau}|u|^p(J_\alpha*|\cdot|^{-\tau}|u|^{p})dx\\
&\qquad\qquad  \qquad \qquad -\frac{2\tau}p\int_{\R^n}x\cdot\nabla\phi|x|^{-\tau-2}|u|^{p}(J_\alpha*|\cdot|^{-\tau}|u|^p)dx\\
& \qquad \qquad\qquad  \qquad \qquad -\frac{2(N-\alpha)}p\int_{\R^n}\partial_k \phi|x|^{-\tau}|u|^{p}(\frac{x_k}{|\cdot|^2}J_\alpha*|\cdot|^{-\tau}|u|^p)dx.
\end{align*}
\end{lem}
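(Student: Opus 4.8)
The plan is to prove the identity of Lemma \ref{lem} by a direct computation, understanding it (as it is used in \eqref{sec}) after multiplication by $\partial_k\phi$ and summation in $k$; the displayed free index $k$ and the $\partial_k\phi$ weighting are then read off consistently. I would split the expression into its \emph{kinetic} contribution, coming from $-\Delta u$, and its \emph{nonlinear} contribution, coming from $\mathcal N$, and handle them independently. The inverse-square piece does not enter here, having already been extracted in \eqref{aa}.

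For the kinetic part I would integrate by parts twice. Writing $\Delta=\partial_j\partial_j$ and peeling one derivative off each factor, the two non-divergence remainders $\Re(\partial_j\bar u\,\partial_j\partial_k u)$ and $\Re(\partial_j u\,\partial_j\partial_k\bar u)$ are complex conjugates and hence equal, so their difference vanishes and $\Re(\bar u\partial_k\Delta u-\Delta u\partial_k\bar u)=\Re\,\partial_j(\bar u\,\partial_j\partial_k u-\partial_j u\,\partial_k\bar u)$. Pairing with $\partial_k\phi$, summing in $k$, and integrating by parts once more transfers a derivative onto $\phi$; rewriting $\Re(\bar u\,\partial_j\partial_k u)=\tfrac12\partial_j\partial_k|u|^2-\Re(\partial_j u\,\partial_k\bar u)$ and integrating the Hessian-of-$|u|^2$ term by parts twice then yields exactly $2\sum_{k,l}\int_{\R^n}\partial_l\partial_k\phi\,\Re(\partial_k u\,\partial_l\bar u)\,dx-\tfrac12\int_{\R^n}\Delta^2\phi\,|u|^2\,dx$. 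This step is routine.

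The nonlinear part is where the structure of \eqref{S} is used. Setting $W:=|x|^{-\tau}|u|^{p-2}(J_\alpha*|\cdot|^{-\tau}|u|^p)$, which is \emph{real}, one has $\mathcal N=\pm W u$, so the purely imaginary cross term drops and $\Re(\bar u\partial_k\mathcal N-\mathcal N\partial_k\bar u)=\pm|u|^2\partial_k W$, the sign being fixed by the convention in \eqref{S1}. I would then write $|u|^2\partial_k W=\partial_k(|u|^2 W)-W\partial_k(|u|^2)$ and use $W\,\nabla(|u|^2)=\tfrac2p\,|x|^{-\tau}(J_\alpha*|\cdot|^{-\tau}|u|^p)\,\nabla(|u|^p)$. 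After pairing with $\nabla\phi$ and integrating by parts, the derivative distributes onto the three factors $\Delta\phi$, $|x|^{-\tau}$, and the potential: collecting the $\Delta\phi$ contributions produces the $\tfrac{p-2}p$ term, while differentiating the weight through $\nabla|x|^{-\tau}=-\tau|x|^{-\tau-2}x$ produces the $\tfrac{2\tau}p$ term.

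The one genuinely nonlocal step, and the main obstacle, is the potential term $\nabla(J_\alpha*|\cdot|^{-\tau}|u|^p)$. Here I would exploit the homogeneity of the Riesz kernel: since $J_\alpha(z)=c_{N,\alpha}|z|^{\alpha-N}$, one has $\nabla J_\alpha(z)=(\alpha-N)\tfrac{z}{|z|^2}J_\alpha(z)$, whence $\nabla(J_\alpha*F)=(\alpha-N)\big(\tfrac{\cdot}{|\cdot|^2}J_\alpha\big)*F$; this is exactly what converts the gradient of the potential into the kernel $\tfrac{x_k}{|\cdot|^2}J_\alpha$ of the last term, with factor $\tfrac{2(N-\alpha)}p$. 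The remaining difficulty is one of justification rather than algebra: each integration by parts must lose no boundary term, both at the origin—where the weights $|x|^{-\tau}$, $|x|^{-\tau-2}$ and the kernel singularity interact—and at infinity, which is where the admissible ranges of $\tau,\alpha$ and the regularity $u\in H^1_\lambda$ enter. I would therefore first prove the identity for Schwartz data supported away from the origin, where all manipulations are classical, and then pass to the limit by density together with the bounds of Section 2. Since this is the identity of \cite[Proposition 2.12]{sx} augmented only by the inverse-square contribution, one may alternatively invoke it directly.
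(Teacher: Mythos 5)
Your proposal is correct, but it takes a genuinely different route from the paper: the paper does not prove this lemma at all---it explicitly omits the proof and invokes \cite[Proposition 2.12]{sx}, which is exactly the ``alternative'' you offer in your closing sentence. Your direct computation does check out. The kinetic part is the standard Morawetz manipulation and yields $2\sum_{k,l}\int_{\R^n}\partial_l\partial_k\phi\,\Re(\partial_ku\,\partial_l\bar u)\,dx-\tfrac12\int_{\R^n}\Delta^2\phi\,|u|^2\,dx$; for the nonlinear part, the reality of $W$, the splitting $|u|^2\nabla W=\nabla(|u|^2W)-W\nabla(|u|^2)$, the identity $W\nabla(|u|^2)=\tfrac2p|x|^{-\tau}(J_\alpha*|\cdot|^{-\tau}|u|^p)\nabla(|u|^p)$, and the kernel differentiation $\nabla(J_\alpha*F)=(\alpha-N)\big(\tfrac{\cdot}{|\cdot|^2}J_\alpha\big)*F$ reproduce the three nonlinear terms with precisely the stated coefficients $\tfrac{p-2}{p}$, $\tfrac{2\tau}{p}$, $\tfrac{2(N-\alpha)}{p}$. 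One point deserves emphasis: your decision to leave the sign as ``$\pm$, fixed by the convention in \eqref{S1}'' is not merely prudent but necessary, because the lemma as printed is internally inconsistent---it defines $\mathcal N=-Wu$, yet the displayed right-hand side (with its minus signs) is the one produced by $\mathcal N=+Wu$, which is the convention actually used in the proof of Proposition \ref{mrwz}, where the equation is written $i\partial_t u=-\Delta u+\lambda|x|^{-2}u-\mathcal N$. Running your computation literally with $\mathcal N=-Wu$ would flip the signs of the three nonlinear terms, so the minus sign in the lemma's definition of $\mathcal N$ is a typo that your hedge silently corrects; the same care applies to your reading of the free index $k$ against the weight $\partial_k\phi$, which is the only interpretation making \eqref{sec} produce Proposition \ref{mrwz}. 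As for what each approach buys: the paper's citation is economical but leaves the appendix non-self-contained and hides both the sign convention and the justification of the integrations by parts; your computation exposes the origin of every coefficient and at least names the rigor issue (Schwartz data away from the origin plus density), which neither the paper nor the citation addresses explicitly.
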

Therefore, by combining \eqref{sec}, \eqref{vr}, Lemma \ref{lem} and \eqref{aa}, we finish the proof.
\end{proof}

\section{Declarations}
$\!\!\!\!\!\!\bullet$ The authors have no relevant financial or non-financial interests to disclose.\\
$\bullet$ The authors have no competing interests to declare that are relevant to the content of this article.\\
$\bullet$ All authors certify that they have no affiliations with or involvement in any organization or entity with any financial interest or non-financial interest in the subject matter or materials discussed in this manuscript.\\
$\bullet$ The authors have no financial or proprietary interests in any material discussed in this article.\\
$\bullet$ The data that support the findings of this study are available from the corresponding author
upon reasonable request.


\end{document}